\newtheorem{theorem}{Theorem}[section]
\newtheorem{lemma}{Lemma}[section]
\newtheorem{corollary}{Corollary}[section]
\newtheorem{remark}{Remark}[section]
\newtheorem{question}{Question}[section]
\begin{document}
	
	%\title[The right-angled Coxeter group of minimal covolume]{The right-angled Coxeter group of minimal covolume \\ in three-dimensional hyperbolic space}
	
	\title[The minimal covolume hyperbolic right-angled Coxeter group]{The minimal covolume right-angled Coxeter group \\ in hyperbolic 3-space}
	
	\author[ A.~Egorov, A.~Vesnin]{A.~Egorov, A.~Vesnin} 
\address{Sobolev Institute of Mathematics, Siberian Branch of Russian Academy of Sciences, Novosibirsk, Russia}
	\email{a.egorov2@g.nsu.ru} 
	\email{vesnin@math.nsc.ru}
	
\begin{abstract}
		We prove that among all right-angled Coxeter groups in hyperbolic 3-space, the group generated by reflections in the faces of a right-angled triangular bipyramid with three ideal and two finite vertices has the smallest covolume. The group is arithmetic and its covolume equals Catalan's constant $G = 0.915965\ldots$.  
	\end{abstract} 
	
	\keywords{hyperbolic polyhedron, right-angled polyhedron, right-angled Coxeter group} 
	\subjclass[2000]{51F15, 57K32}	
	
	\thanks{The authors were supported by the state task of Sobolev Institute of Mathematics. Namely, A.V. was supported by the project No.~FWNF-2022-0004 and A.E. was supported by the project No.~FWNF-2022-0017. \\ This is a preprint of the work accepted for publication in Siberian Mathematical Journal \textcopyright \, 2025 Pleiades Publishing, Ltd. https://www.pleiades.online/}
	
	\maketitle
	
	\section{Introduction} \label{sec1} 
	
	A fundamental problem in hyperbolic geometry is the study of discrete subgroups of the group $\operatorname{Isom}(\mathbb H^n)$ of isometries of $n$-dimensional hyperbolic space $\mathbb H^n$, in particular,  groups generated by reflections. At the same time, discrete torsion-free isometry groups correspond to hyperbolic $n$-dimensional manifolds. In many constructions, such groups arise as finite-index subgroups of reflection groups~\cite{Bes, Ve1, Rat}.    
	
Recall~\cite{Cox,Vi1} that a Coxeter group $W$ is defined by a finite presentation of the form 
	$W = \langle s \in S \, | \, (s t)^{m_{st}} = 1, \, \forall s, t \in S \rangle$, 
	where $m_{ss} = 1$ and $m_{st} \in \{2, 3, \ldots, \infty\}$ if $s \neq t$. Here $m_{st} = \infty$ means that there are no relations between $s$ and $t$. A Coxeter group $W$ is called right-angled if $m_{st} \in \{2,\infty\}$ for $s\neq t$. 
	
	A convex polyhedron $P \subset \mathbb H^n$ with dihedral angles of the form $\pi/m$ for integer $m \geq 2$ at $(n-2)$-dimensional faces is called a hyperbolic Coxeter polyhedron. The group $\Gamma(P)$, generated by reflections in the $(n-1)$-dimensional faces of $P$, is a Coxeter group. The covolume of $\Gamma(P)$ is defined as the volume $\operatorname{vol} (P)$. We say that $\Gamma(P)$ is cocompact if $P$ is a compact polyhedron, and that $\Gamma(P)$ has finite covolume if $P$ has finite volume. 
It was shown by Vinberg~\cite{Vi3}, if $n \geq 30$, then no cocompact Coxeter groups exist in $\mathbb H^n$. Examples are known only for $d \leq 8$~\cite{Bug}. According to~\cite{Pro, Kho}, if $n > 995$, then no Coxeter groups of finite covolume exist in $\mathbb H^n$. Examples are known only for $n \leq 19$~\cite{ViKa} and $n=21$~\cite{Bor}. As shown in~\cite{All}, there are infinitely many Coxeter groups of finite covolume (resp. cocompact) in $\mathbb H^n$ for each $n \leq 19$ (resp. $n \leq 6$).
	
This work considers right-angled polyhedra of finite volume in three-dimensional hyperbolic space $\mathbb H^3$ and their corresponding right-angled Coxeter groups. A polyhedron $P$ is called right-angled if all its dihedral angles equal $\pi/2$. In this case, the corresponding reflection group $\Gamma(P)$ is a right-angled Coxeter group. It is known that no compact right-angled hyperbolic Coxeter groups exist if $n>4$~\cite{PoVi}, and none with finite-volume fundamental polyhedron if $n>12$~\cite{Duf}. Examples are known in dimensions $n \leq 8$, see~\cite{Vi2}. 

Dunbar and Meyerhoff~\cite{DuMe} showed that the set of volumes of three-dimensional hyperbolic orbifolds of finite volume has ordinal type $\omega^{\omega}$, and the number of orbifolds of a given volume is finite. 
Traditionally, volumes of polyhedra in three-dimensional hyperbolic space are computed using the following \textit{Lobachevsky function}, see~\cite{Mil}, 
$$
\Lambda (\theta) = - \int\limits_0^{\theta} \log | 2 \sin (t) | \, {\rm d} t.  \label{lob}
$$
Below we will use the value $v_{oct} = 8 \Lambda(\pi/4) = 3.663862$, equal to the volume of a regular ideal octahedron in $\mathbb H^3$, and the value $v_{tet} = 3 \Lambda(\pi/3) = 1.014941$, equal to the volume of a regular ideal tetrahedron in $\mathbb H^3$. Here and throughout, all approximate values of the Lobachevsky function and volume values are given to six decimal places.  
	
Discrete reflection groups are conveniently described using Coxeter diagrams~\cite{Vi1, Vi4}. To each Coxeter polyhedron, particularly in $\mathbb H^3$, one can corresponds a graph called its Coxeter diagram. It is defined as follows. The vertices of the Coxeter diagram correspond to the faces of the polyhedron. If two faces are perpendicular, the vertices are not connected by an edge. If the angle between faces is $\pi/m$, $m\geq 3$, the corresponding vertices are connected by an edge of multiplicity $m-2$, if $m \in \{ 3,4,5\}$, or a regular edge with label $m$. Coxeter diagrams are also used to denote Coxeter groups generated by reflections in the faces of a Coxeter polyhedron. 
	
Let $\Delta_{3,4,4}$ denote a tetrahedron in $\mathbb H^3$ with faces $f_1, f_2, f_3, f_4$ where the dihedral angles $\alpha_i$ between faces $f_i$ and $f_{i+1}$, $i=1,2,3$, are $\alpha_1 = \pi/3$, $\alpha_2= \pi/4$, $\alpha_3 = \pi/4$, and all other dihedral angles equal $\pi/2$. The Coxeter diagram for the group $\Gamma(\Delta_{3,4,4})$, generated by reflections in the faces of $\Delta_{3,4,4}$, is shown in Fig.~\ref{figCo}\,(a), with face notations indicated.
\begin{figure}[ht]	
\begin{center} 
\begin{tikzpicture}[scale=0.7] 				
\draw[very thick, black] (0,0) -- (1,0);
\draw[very thick, black] (1,-0.1) -- (2,-0.1);
\draw[very thick, black] (1,0.1) -- (2,0.1);
\draw[very thick, black] (2,-0.1) -- (3,-0.1);
\draw[very thick, black] (2,0.1) -- (3,0.1);
\fill[black] (0,0) circle (4pt); 
\fill[black] (1,0) circle (4pt);
\fill[black] (2,0) circle (4pt);
\fill[black] (3,0) circle (4pt); 
\node[] at (0,0.6) {$f_1$};
\node[] at (1,0.6) {$f_2$};
\node[] at (2,0.6) {$f_3$};
\node[] at (3,0.6) {$f_4$};
\node[] at (1.5,-1.2) {$(a)$};
\end{tikzpicture}
\qquad \qquad 
\begin{tikzpicture}[scale=0.8] 				
\draw[very thick, black] (1.9,0) -- (1.9,1);
\draw[very thick, black] (2.1,0) -- (2.1,1);
\draw[very thick, black] (2,0.1) -- (3,-0.6);
\draw[very thick, black] (2,-0.1) -- (3,-0.8);
\draw[very thick, black] (2,0.1) -- (1,-0.6);
\draw[very thick, black] (2,-0.1) -- (1,-0.8);
\fill[black] (2,1) circle (4pt); 
\fill[black] (2,0) circle (4pt);
\fill[black] (1,-0.7) circle (4pt);
\fill[black] (3,-0.7) circle (4pt); 
\node[] at (2.,-1.2) {$(b)$};
\end{tikzpicture}
\end{center} 
\caption{Coxeter diagrams for groups $\Gamma(\Delta_{3,4,4})$ and $\Gamma(\Delta'_{3,4,4})$.} \label{figCo} 	
\end{figure}
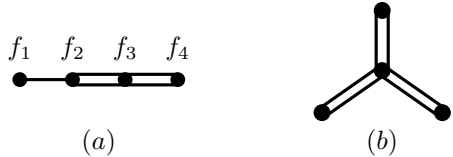  

The tetrahedron $\Delta_{3,4,4}$ has three finite vertices and one ideal vertex belonging to faces $f_2$, $f_3$, and $f_4$. Under the action of the dihedral group of order six, generated by reflections in faces $f_1$ and $f_2$, six copies of $\Delta_{3,4,4}$ form the tetrahedron $\Delta'_{3,4,4}$ in Fig.~\ref{fig32}\,(a), whose three ideal vertices lie in a one plane containing face $f_4$ and three right angles meet in the finite vertex (such tetrahedra are called trirectangular in~\cite{AbSt}). The Coxeter diagram of the groups $\Gamma(\Delta'_{3,44})$ generated by reflections in faces of $\Delta'_{3,4,4}$ is presented in Fig.~\ref{figCo}\,(b). 
Combining $\Delta'_{3,4,4}$ with its mirror image across the plane containing the face $f_4$ yields a triangular bipyramid with six faces and all dihedral angles $\pi/2$. Since this bipyramid has three ideal and two finite vertices, we will denote it by   $\mathcal P_{(3,2)}$. The polyhedron $P_{(3,2)}$ and its Schlegel diagram are shown in Fig.~\ref{fig32}\,(b) and (c). Note that $\mathcal P_{(3,2)}$ has appeared in various contexts in papers~\cite{ERT, RT, PoVi, Pro}. 
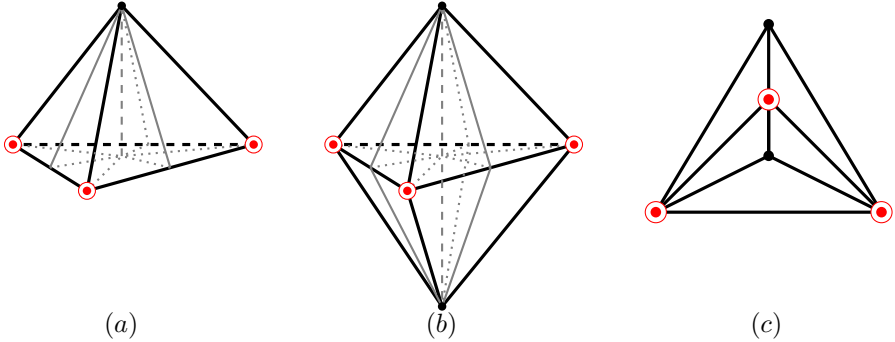
\begin{figure}[ht]	
\begin{center} 
	\begin{tikzpicture}[scale=0.8] 		
			\coordinate (A) at (0,0,2);
			\coordinate (B) at (-2,0,0);
			\coordinate (C) at (2,0,0);
			\coordinate (V1) at (0,2.5,0.5);
			\coordinate (V2) at (0,-2.5,0.5);
			\coordinate (AB) at (-1,0,1);
			\coordinate (BC) at (0.25,0,0);
			\coordinate (CA) at (1,0,1);
			\coordinate (O) at (0,0,0.5);
			\draw[very thick, black] (A) -- (B);
			\draw[very thick, black, dashed] (C) -- (B);
			\draw[very thick, black] (C) -- (A);
			\draw[very thick, black] (A) -- (V1);
			\draw[very thick, black] (B) -- (V1); 
			\draw[very thick, black] (C) -- (V1);	
			\draw[thick, gray] (AB) -- (V1);
			\draw[thick, gray, dotted] (BC) -- (V1);
			\draw[thick, gray] (CA) -- (V1);
			\draw[thick, gray, dashed] (V1) -- (O);
			\draw[thick, gray, dotted] (A) -- (O);
			\draw[thick, gray, dotted] (B) -- (O);
			\draw[thick, gray, dotted] (C) -- (O);
			\draw[thick, gray, dotted] (AB) -- (O);
			\draw[thick, gray, dotted] (BC) -- (O);
			\draw[thick, gray, dotted] (CA) -- (O);
			\foreach \v in {V1} {
				\fill[black] (\v) circle (2pt);}
				\foreach \v in {V2} {
				\fill[white] (\v) circle (2pt);}
			\foreach \v in {A, B, C} {
				\fill[white] (\v) circle (4pt); 
				\draw[red] (\v) circle (4pt); 
				\fill[red] (\v) circle (2pt);}
		\node[] at (-0.2,-3) {$(a)$};
		\end{tikzpicture}
	\qquad 
		\begin{tikzpicture}[scale=0.8] 		
			\coordinate (A) at (0,0,2);
			\coordinate (B) at (-2,0,0);
			\coordinate (C) at (2,0,0);
			\coordinate (V1) at (0,2.5,0.5);
			\coordinate (V2) at (0,-2.5,0.5);
			\coordinate (AB) at (-1,0,1);
			\coordinate (BC) at (0.25,0,0);
			\coordinate (CA) at (1,0,1);
			\coordinate (O) at (0,0,0.5);
			\draw[very thick, black] (A) -- (B);
			\draw[very thick, black, dashed] (C) -- (B);
			\draw[very thick, black] (C) -- (A);
			\draw[very thick, black] (A) -- (V1);
			\draw[very thick, black] (B) -- (V1); 
			\draw[very thick, black] (C) -- (V1);	
			\draw[very thick, black] (A) -- (V2);
			\draw[very thick, black] (B) -- (V2); 
			\draw[very thick, black] (C) -- (V2);
			\draw[thick, gray] (AB) -- (V1);
			\draw[thick, gray, dotted] (BC) -- (V1);
			\draw[thick, gray] (CA) -- (V1);
			\draw[thick, gray] (AB) -- (V2);
			\draw[thick, gray, dotted] (BC) -- (V2);
			\draw[thick, gray] (CA) -- (V2);
			\draw[thick, gray, dashed] (V1) -- (V2);
			\draw[thick, gray, dotted] (A) -- (O);
			\draw[thick, gray, dotted] (B) -- (O);
			\draw[thick, gray, dotted] (C) -- (O);
			\draw[thick, gray, dotted] (AB) -- (O);
			\draw[thick, gray, dotted] (BC) -- (O);
			\draw[thick, gray, dotted] (CA) -- (O);
			\foreach \v in {V1, V2} {
				\fill[black] (\v) circle (2pt);}
			\foreach \v in {A, B, C} {
				\fill[white] (\v) circle (4pt); 
				\draw[red] (\v) circle (4pt); 
				\fill[red] (\v) circle (2pt);}
		\node[] at (-0.2,-3) {$(b)$};
		\end{tikzpicture}
\qquad 
\begin{tikzpicture}[scale=1.] 				
\coordinate (A) at (0,0);
\coordinate (B) at (3,0);
\coordinate (C) at (1.5, 2.5);
\coordinate (D) at (1.5, 0.75);
\coordinate (E) at (1.5, 1.5);
\draw[very thick, black] (A) -- (B) -- (C) -- cycle;
\draw[very thick, black] (A) -- (D) -- (B) -- (E) -- cycle;
\draw[very thick, black] (C) -- (D);
\draw[very thick, black] (D) -- (E);
\foreach \v in {C, D} {
\fill[black] (\v) circle (2pt);
}
\foreach \v in {A,B,E} {
\fill[white] (\v) circle (4pt); 
\draw[red] (\v) circle (4pt); 
\fill[red] (\v) circle (2pt);
}
\fill[white] (0,-1) circle (2pt);
		\node[] at (1.47,-1.5) {$(c)$};
\end{tikzpicture}
\end{center} 
\caption{Tetrahedron $\Delta'_{3,4,4}$, polyhedron $\mathcal P_{(3,2)}$ and Schlegel diagram of $\mathcal P_{(3,2)}$.} \label{fig32} 	
\end{figure}

By construction, volume of the right-angled polyhedron $\mathcal P_{(3,2)}$ equals $\operatorname{vol} (\mathcal P_{(3,2)}) = 2 \operatorname{vol} (\Delta'_{3,4,4}) = 12 \operatorname{vol} (\Delta_{3,4,4}) = 2 \Lambda\left(\frac{\pi}{4}\right)$, where the volume of $\Delta_{3,4,4}$ is computed via the Lobachevsky function using formula (\ref{eqnKel}) below. It is well-known~\cite{OEIS} that $2 \Lambda\left(\frac{\pi}{4}\right) = G$, where 
$$
G = \sum_{n=0}^{\infty} \frac{(-1)^n}{(2n+1)^2}
$$ 
is Catalan's constant, introduced in his work~\cite{Cat} in 1867. To six decimal places, $G = 0.915965$. More precise approximations of Catalan's constant $G$ can be found in~\cite{Pap}. 

The main result of the present paper is the following 

\begin{theorem} \label{theorem1.1}
Let $\mathcal P$ be a right-angled hyperbolic polyhedron in $\mathbb H^3$. Then the inequality $\operatorname{vol}(P) \geq G$ holds, where $G = 2 \Lambda\left(\frac{\pi}{4}\right)$ is Catalan's constant. Moreover, the triangular bipyramid $P_{(3,2)}$ is the unique right-angled polyhedra for which the equality holds. 
\end{theorem}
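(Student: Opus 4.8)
The plan is to turn Theorem~\ref{theorem1.1} into a finite inspection of combinatorial types, using elementary angle estimates together with Andreev's theorem to restrict the admissible types and volume lower bounds to eliminate the ones with many faces. First I would record the local structure of a right-angled polyhedron $\mathcal P$. Three mutually orthogonal faces meet at every finite vertex, so finite vertices are trivalent, while the link of an ideal vertex is a Euclidean polygon all of whose angles are right, i.e.\ a rectangle, so ideal vertices are $4$-valent. Inside any face the interior angle is $\pi/2$ at a finite vertex and $0$ at an ideal vertex; since a hyperbolic $n$-gon has angle sum strictly below $(n-2)\pi$, an $n$-gonal face contains at least $5-n$ ideal vertices --- in particular every triangular face has at least two ideal vertices and every quadrilateral face at least one. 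Because the two combinatorial polyhedra with five faces (the triangular prism and the square pyramid) would force a triangular, resp.\ square, face to have only trivalent, hence finite, vertices, no right-angled polyhedron has fewer than six faces.

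Next I would run the classification for small face numbers. A count with Euler's formula shows that a $6$-face polyhedron with all vertices $3$- or $4$-valent and no all-finite triangle or quadrilateral must have $5$ vertices ($2$ trivalent, $3$ four-valent), $9$ edges and six triangular faces, i.e.\ it is combinatorially the triangular bipyramid; by Andreev's theorem (finite-volume form) this combinatorial type is realized by a right-angled polyhedron, uniquely up to isometry, namely $\mathcal P_{(3,2)}$, whose volume the Introduction records as $2\Lambda(\pi/4)=G$. Uniqueness of the realization also follows from Mostow--Prasad rigidity. The two ``pure'' cases are then disposed of by the known volume estimates: if $\mathcal P$ has no ideal vertex it is compact with no triangular or quadrilateral faces, hence $F\ge 12$ and, by Atkinson-type linear lower bounds, $\operatorname{vol}(\mathcal P)$ is well above $G$; and if every vertex of $\mathcal P$ is ideal then $\mathcal P$ is an ideal right-angled polyhedron with $F\ge 8$, so $\operatorname{vol}(\mathcal P)\ge v_{oct}=3.663862\ldots>G$ since the ideal octahedron is volume-minimal among these. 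It remains to treat mixed polyhedra with $F\ge 7$.

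For $F\ge 7$ I would combine two ingredients. On one side, volume lower bounds for right-angled hyperbolic polyhedra that are linear in $F$ and weighted by the number $k\ge 1$ of ideal vertices (after Atkinson, and in the sharper forms available in the authors' earlier work) give $\operatorname{vol}(\mathcal P)>G$ as soon as $F$ and $k$ are not both small, and in particular once $F$ exceeds a small explicit bound. On the other side, the same angle-and-valence constraints as above, applied to the face-size and vertex-degree sums, eliminate every right-angled polyhedron with $F\in\{7,8\}$ and too few ideal vertices --- for instance a triangular face needs both of the available ideal vertices when $k=2$, and each ideal vertex lies on only four faces, which quickly contradicts the face count. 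What remains is a short explicit list of combinatorial types, each of which one decomposes into Coxeter orthoschemes --- or, when the symmetry group allows, into copies of a small Coxeter tetrahedron such as $\Delta_{3,4,4}$ --- and whose volume one evaluates through the Lobachevsky function, checking in each case that it strictly exceeds $G$. Together with the $F=6$ analysis this yields $\operatorname{vol}(\mathcal P)\ge G$ with equality exactly for $\mathcal P_{(3,2)}$.

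The main obstacle is the range $7\le F\le 10$: here the purely linear volume bound is not yet larger than $G=v_{oct}/4$, so one needs the $k$-weighted refinement to be both available and sharp enough, and one must push the elementary face-counting far enough that only a genuinely short list of combinatorial types survives. For those survivors the strict inequality $\operatorname{vol}(\mathcal P)>G$ then has to be established either by closed-form volume formulas (reliable for the small, highly symmetric polyhedra) or by rigorous numerical estimates, and the uniqueness statement rests on Andreev's realization theorem and Mostow--Prasad rigidity to guarantee that the combinatorial type determines the polyhedron.
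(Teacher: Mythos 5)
Your overall strategy is the same as the paper's (local angle/valence constraints, Atkinson's cusped lower bound to confine $(V_{\infty},V_f)$ to a finite set, then explicit identification and volume computation of the survivors), and your treatment of the two pure cases is fine --- indeed, for the compact case your route via $F\ge 12$, hence $V\ge 20$, plus Atkinson's compact bound is even a bit more elementary than the paper's appeal to Inoue's minimality of $\mathcal L_5$. But the proposal stops exactly where the real work begins, and two concrete pieces are missing. First, the case of exactly one ideal vertex is not covered by anything you invoke: Atkinson's bound $\operatorname{vol}(\mathcal P)\ge \frac{G}{8}(4V_{\infty}+V_f-8)$ with $V_{\infty}=1$ only beats $G$ when $V_f\ge 14$, so the pairs $(1,6),\dots,(1,12)$ survive the linear bound and need a separate argument; the paper kills this case with Nonaka's result that a right-angled polyhedron with one ideal vertex has $F\ge 12$ (hence $V_f\ge 18$), and you would need either that result or an explicit face-size count (no triangles, at most four quadrilaterals through the single ideal vertex, all remaining faces pentagons or larger) --- neither appears in your text, which only discusses $k=2$ as ``for instance''.

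Second, the decisive finite check is only promised, not performed. After the reduction, the admissible pairs are $(2,4),(2,6),(2,8),(3,2),(3,4)$; one must show $(2,4)$ and $(2,6)$ are combinatorially impossible, that $(2,8)$ forces a unique polyhedron $P_{(2,8)}$ (this requires ruling out several configurations of the two ideal vertices, not just a degree count), that $(3,2)$ forces the bipyramid $P_{(3,2)}$, and that $(3,4)$ forces $P_{(3,4)}$; and then one must actually compute $\operatorname{vol}(P_{(2,8)})=4\Lambda(\pi/4)=2G$ (via the symmetry quotient $\Delta_{4,4,4}$) and $\operatorname{vol}(P_{(3,4)})=\frac14\operatorname{vol}(\mathcal A_4)\approx 1.505>G$. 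You acknowledge this range ($7\le F\le 10$) as ``the main obstacle'' and state that the strict inequalities ``have to be established'', but you neither list the surviving types nor carry out any of the eliminations or volume evaluations, and your $F=6$ claim (that Euler's formula plus the no-all-finite-face condition forces the bipyramid) silently absorbs the nontrivial exclusion of $(2,4)$ and $(1,6)$. As written, the argument is a correct plan for the paper's proof rather than a proof: the case analysis that constitutes its substance is absent.
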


It is well-known that the arithmeticity of groups of three-dimensional hyperbolic manifolds and orbifolds plays an important role in studying of their properties~\cite{MaRe}. The question that comes to Siegel~\cite{Sie} is: which hyperbolic manifolds and orbifolds have the smallest volume in the orientable and non-orientable cases? As noted in~\cite{Bel}, there is a \textit{folklore conjecture} that the minimal volumes are always achieved by arithmetic manifolds or orbifolds. By now, this conjecture has been fully confirmed for $n=3$, see~\cite{Ada, ChFr, CFJR, GMM, GeMa, MaMa, Mey}. Note that a similar property holds for right-angled groups. Namely, the minimal cocompact right-angled hyperbolic Coxeter group is arithmetic by~\cite{AMR} and~\cite{BoDu}. As a corollary of Theorem~\ref{theorem1.1}, the minimal covolume right-angled hyperbolic Coxeter group is arithmetic also.   

\begin{corollary} \label{cor1}
The right-angled hyperbolic Coxeter group in $\mathbb H^3$ of minimal covolume is arithmetic.  
\end{corollary}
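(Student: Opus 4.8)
The plan is to derive Corollary~\ref{cor1} from Theorem~\ref{theorem1.1} by combining it with the commensurability invariance of arithmeticity and with Vinberg's arithmeticity criterion. By Theorem~\ref{theorem1.1} the covolume of a right-angled hyperbolic Coxeter group in $\mathbb{H}^3$ is minimized, uniquely, by $\Gamma(\mathcal P_{(3,2)})$, so it is enough to show that $\Gamma(\mathcal P_{(3,2)})$ is arithmetic.

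First I would reduce the question to a Coxeter tetrahedron. Recall from the construction of $\mathcal P_{(3,2)}$ that it is obtained by doubling $\Delta'_{3,4,4}$ across the face lying in the plane of $f_4$, and that $\Delta'_{3,4,4}$ is itself a union of six copies of $\Delta_{3,4,4}$ under the order-six dihedral group generated by the reflections in $f_1$ and $f_2$. Consequently every reflecting hyperplane of $\mathcal P_{(3,2)}$ is an image, under the reflection group of $\Delta'_{3,4,4}$, of a reflecting hyperplane of $\Delta'_{3,4,4}$, and likewise for $\Delta'_{3,4,4}$ relative to $\Delta_{3,4,4}$; hence the reflections in the faces of the smaller polyhedra are conjugates of the generators of the larger groups, giving inclusions
$$
\Gamma(\mathcal P_{(3,2)}) \ \leq\ \Gamma(\Delta'_{3,4,4}) \ \leq\ \Gamma(\Delta_{3,4,4}),
$$
of indices $2$ and $6$ respectively, as one reads off from $\operatorname{vol}(\mathcal P_{(3,2)}) = 2\operatorname{vol}(\Delta'_{3,4,4}) = 12\operatorname{vol}(\Delta_{3,4,4})$. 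Since arithmeticity is invariant under commensurability, it remains to prove that the Coxeter tetrahedral group $\Gamma(\Delta_{3,4,4})$ is arithmetic.

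For this I would apply Vinberg's arithmeticity criterion to the Gram matrix of $\Delta_{3,4,4}$,
$$
G \ =\ \begin{pmatrix}
1 & -\tfrac12 & 0 & 0\\
-\tfrac12 & 1 & -\tfrac{\sqrt2}{2} & 0\\
0 & -\tfrac{\sqrt2}{2} & 1 & -\tfrac{\sqrt2}{2}\\
0 & 0 & -\tfrac{\sqrt2}{2} & 1
\end{pmatrix},
$$
whose underlying Coxeter graph is the path $f_1 - f_2 - f_3 - f_4$. Since any closed walk in a tree traverses each edge an even number of times, every cyclic product $b_{i_1i_2}b_{i_2i_3}\cdots b_{i_ki_1}$, with $b_{ij} = 2g_{ij}$, is a product of the squares $(2g_{12})^2 = 1$, $(2g_{23})^2 = 2$, $(2g_{34})^2 = 2$, and hence a rational integer. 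Thus the field generated by the cyclic products is $\mathbb{Q}$: it is totally real, the integrality requirement of the criterion holds, and there are no non-trivial real embeddings to test for positive semidefiniteness. Therefore $\Gamma(\Delta_{3,4,4})$ is arithmetic. (Alternatively, one may invoke the classical list of arithmetic hyperbolic Coxeter tetrahedra, in which $\Delta_{3,4,4}$ appears with invariant trace field $\mathbb{Q}(i)$ and is commensurable with the Picard group $\mathrm{PSL}_2(\mathbb{Z}[i])$.) Combining this with the commensurability chain above and with Theorem~\ref{theorem1.1} gives the corollary.

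I expect the only subtle point to be the bookkeeping in Vinberg's criterion: the entries of $G$ generate $\mathbb{Q}(\sqrt2)$, and the matrix obtained by the substitution $\sqrt2 \mapsto -\sqrt2$ fails to be positive semidefinite, so it is essential to use the field generated by the \emph{cyclic products} — here $\mathbb{Q}$ — rather than the field generated by the individual matrix entries. The remaining point deserving care is the justification of the two finite-index inclusions, i.e.\ checking that the reflecting hyperplanes of $\Delta'_{3,4,4}$ and of $\mathcal P_{(3,2)}$ are genuinely permuted by the larger reflection groups; this is routine given the explicit geometric description but should be written out.
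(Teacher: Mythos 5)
Your proposal is correct and follows essentially the same route as the paper: Theorem~\ref{theorem1.1} reduces the corollary to the arithmeticity of $\Gamma(\mathcal P_{(3,2)})$, which is commensurable with the tetrahedral group $\Gamma(\Delta_{3,4,4})$ via the decomposition of $\mathcal P_{(3,2)}$ into twelve copies of $\Delta_{3,4,4}$. The only (harmless) difference is that you verify arithmeticity of $\Gamma(\Delta_{3,4,4})$ directly with Vinberg's non-cocompact criterion on the doubled Gram matrix, whereas the paper cites the known commensurability with the Picard group $\operatorname{PSL}(2,\mathbb Z[\sqrt{-1}])$ and illustrates Vinberg's criterion on $\Delta_{4,4,4}$ instead; your computation is correct since the Coxeter diagram is a tree, so all cyclic products are integers.
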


The paper as organized as follows. In Section~\ref{sec2}, we recall some results about right-angled polyhedra in $\mathbb H^3$ and their volumes. More detailed information about the geometry of $\mathbb H^3$ and  hyperbolic manifolds and orbifolds can be found in~\cite{Rat}. In Section~\ref{sec3}, we present the proof of Theorem~\ref{theorem1.1}, structured as a sequence of Lemmas~\ref{lemma3.1}--\ref{lemma3.4}. In Section~\ref{sec4}, we discuss the arithmeticity of the right-angled reflection groups introduced in Section~\ref{sec3}. We conclude the paper with some open questions formulated in Section~\ref{sec5}. 

\section{Preliminaries} \label{sec2}

\subsection{Existence of right-angled hyperbolic polyhedra} 
Let $\mathbb R^{n, 1}$ denote the vector space $\mathbb R^{n + 1}$ equipped with a scalar product $\langle \cdot, \cdot \rangle$ of signature $(n, 1)$, and let $f_n$ be the associated quadratic form. In an appropriate basis, this form can be expressed as:  
$$
f_n (x) =  -x_0^2 + x_1^2 + \dotsm + x_n^2.
$$
The \emph{Lobachevsky space $\mathbb H^n$ of dimension $n$} is defined as the upper connected component of the hyperboloid given by the equation $f_n(x) = -1$:
$$
\mathbb H^{n} = \{ x \in \mathbb R^{n, 1} \mid f_n (x) = -1\, \text{ and }\, x_{0} > 0 \}.
$$
In this model, points at infinity correspond to isotropic vectors:
$$
\partial \mathbb H^n = \{x \in \mathbb R^{n, 1} \mid f_n (x) = 0\, \text{ and }\, x_{0} > 0\} / \mathbb R_+.
$$

A \emph{convex hyperbolic polyhedron} of dimension $n$ is the intersection of a finite family of closed half-spaces in $\mathbb H^n$ that contains a non-empty open set. A convex hyperbolic polyhedron is called a \emph{Coxeter hyperbolic polyhedron} if all its dihedral angles are integer fractions of $\pi$, i.e., of the form $\pi/m$ for some integer $m \geq 2$. A Coxeter hyperbolic polyhedron is called \emph{right-angled} if all its dihedral angles equal $\pi/2$. If all dihedral angles of a generalized\footnote{A \emph{generalized convex polyhedron} $P$ is an intersection (with non-empty interior), possibly of infinitely many half-spaces in $\mathbb H^n$, such that every closed ball intersects only finitely many boundary hyperplanes defining $P$.} polyhedron do not exceed $\pi/2$, the polyhedron is said to be \emph{acute-angled}.

It is known that generalized Coxeter polyhedra are natural fundamental domains for discrete groups generated by reflections in spaces of constant curvature (see~\cite{Vi4}).

A convex $n$-dimensional polyhedron has finite volume if and only if it is the convex hull of finitely many points in the compactification $\overline{\mathbb H^n} = \mathbb H^n \cup \partial \mathbb H^n$. An $n$-dimensional polyhedron is compact if and only if it is the convex hull of finitely many points in $\mathbb H^n$, which are called \emph{finite}. A convex polyhedron is called \emph{ideal} if all its vertices lie on the absolute $\partial \mathbb H^n$ (such vertices are called \emph{ideal}). It is known~\cite[Th.~1]{An1} that for an acute-angled finite-volume polyhedron $\mathcal P \subset \mathbb H^3$ any finite vertex is incident to three faces and any infinite vertex is incident to three or four faces. 

Two polyhedra $P$ and $P'$ in Euclidean space $\mathbb E^n$ are said to be \emph{combinatorially equivalent} if there exists a bijection between their sets of faces that preserves incidence relations. The class of combinatorially equivalent polyhedra is called a \emph{combinatorial type} of polyhedron. Note that if a hyperbolic polyhedron $P \subset \mathbb H^n$ has finite volume, then its closure $\overline{P} \subset \overline{\mathbb H^n}$ is combinatorially equivalent to some compact polyhedron in $\mathbb E^n$.

The following theorem is a special case of Andreev's theorems for the compact case~\cite{An1} and the finite-volume case~\cite{An2}, see also~\cite{RHD}. Andreev's theorems provide necessary and sufficient conditions for realizing an abstract polyhedron of given combinatorial type and prescribed dihedral angles in Lobachevsky space. We present these conditions for right-angled polyhedra, following~\cite[Th.~2.1]{Atk}. Let $P^*$ denote the planar graph dual to the 1-skeleton $P^{(1)}$ of the polyhedron $P$. 

\begin{theorem}  \cite{An1, An2} \label{theorem2.1}
An abstract polyhedron $P$ can be realized as a right-angled polyhedron $\mathcal{P}$ in $\mathbb H^3$ if and only if the following conditions hold:
\begin{itemize}
	\item[(1)] $P$ has at least six faces.
	\item[(2)] Each vertex of $P$ has degree three or four.
	\item[(3)] For any triple of faces $(F_i, F_j, F_k)$ such that $F_i \cap F_j$ and $F_j \cap F_k$ are edges of $P$ with distinct endpoints, $F_i \cap F_k = \emptyset$ holds.
	\item[(4)] The dual graph $P^*$ contains no prismatic $k$-circuits for $k \leq 4$.
\end{itemize}
Moreover, each degree-three vertex in $P$ corresponds to a finite vertex in $\mathcal{P}$, each degree-four vertex in $P$ corresponds to an ideal vertex in $\mathcal{P}$, and the realization is unique up to isometry.
\end{theorem}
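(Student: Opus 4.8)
\emph{A proof plan.} Theorem~\ref{theorem2.1} is the right‑angled case of Andreev's theorem, so in a paper of this kind one typically just cites~\cite{An1,An2} (or the detailed modern account~\cite{RHD}); for completeness let me describe how I would argue it, splitting the statement into the (elementary) necessity of conditions (1)--(4) and their (much deeper) sufficiency.

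\emph{Necessity.} Let $\mathcal P\subset\mathbb H^3$ be right‑angled of combinatorial type $P$. If $m$ faces meet at a vertex $v$, its link is a spherical $m$-gon when $v$ is finite and a Euclidean $m$-gon when $v$ is ideal, in either case with all angles $\pi/2$; Gauss--Bonnet on the link gives $m=3$ in the spherical case and $m=4$ in the Euclidean one, which is (2) together with the stated correspondence between degree‑$3$ (resp.\ degree‑$4$) vertices and finite (resp.\ ideal) vertices. Consequently every face of $\mathcal P$ is a convex hyperbolic polygon whose interior angles are all $\pi/2$ (at finite vertices) or $0$ (at ideal vertices); Gauss--Bonnet on the faces shows that a face with only finite vertices has at least five sides, and a short case check of polyhedra with at most five faces then rules those out, which is (1). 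For (3) and the prismatic $3$-circuit case of (4): if three faces were pairwise adjacent along edges no two of which share a vertex, their supporting planes would be pairwise orthogonal, hence would meet in a single point of $\mathbb H^3$, through which the three edges would then be forced to pass --- contradicting the distinctness of their endpoints; the variant of (3) in which $F_i\cap F_k$ is a vertex, and the prismatic $4$-circuit case of (4), follow from the same over‑determination of mutually orthogonal hyperplanes.

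\emph{Sufficiency} is the hard direction, and I would reach it by one of two standard routes. (i) \emph{Continuity method} (Andreev's original): form the space $\mathcal R(P)$ of right‑angled polyhedra combinatorially equivalent to $P$, modulo isometry, inside a parameter space of Gram matrices of outward face normals; show that the map recording the combinatorial type is a local homeomorphism onto an open subset --- an infinitesimal‑rigidity (Cauchy/Dehn‑type) argument on the vertex links --- and that it is proper: every way a convergent family of such polyhedra can degenerate (a face collapsing to a point or escaping to $\partial\mathbb H^3$, two non‑adjacent faces touching, an edge shrinking to length $0$) produces in the limit a configuration excluded by exactly one of conditions (1)--(4). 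A proper local homeomorphism into a connected nonempty space is onto, which gives existence; the local‑homeomorphism property gives uniqueness up to isometry. (ii) \emph{Circle packings}: a right‑angled polyhedron in $\mathbb H^3$ is the same datum as a configuration of round circles on $S^2_\infty=\partial\mathbb H^3$, one for each face, meeting orthogonally exactly along the edges of $P$ (and tangentially at ideal vertices) --- that is, a circle pattern whose nerve is the dual graph $P^{*}$; conditions (1)--(4) are precisely the hypotheses under which the Koebe--Andreev--Thurston theorem produces such a pattern and shows it unique up to Möbius transformations of $S^2_\infty$, hence up to isometry of $\mathbb H^3$.

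\emph{Main obstacle.} All the difficulty lies in sufficiency. In route (i) it is the properness step: one must list exhaustively the possible degenerations of a family of right‑angled polyhedra of type $P$ and verify that (1)--(4) --- especially the prismatic‑circuit conditions, which are tailored to exactly this purpose --- rule out each one, while establishing the infinitesimal rigidity that makes the map a local homeomorphism is the other delicate input. In route (ii) the obstacle is quoting and correctly applying the circle‑packing existence and rigidity theorems and translating ``no prismatic $k$-circuit for $k\le 4$'' into the absence of the forbidden circle sub‑configurations. By contrast the necessity of (1)--(4) is routine, resting only on Gauss--Bonnet and on the fact that three mutually orthogonal planes in $\mathbb H^3$ have a common point.
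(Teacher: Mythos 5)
This is a quoted result: the paper offers no proof of Theorem~\ref{theorem2.1}, citing \cite{An1, An2} (in the right-angled formulation of \cite{Atk}, with \cite{RHD} as the modern reference), so there is no internal argument to compare your plan against; your proposal to cite, plus the outline you give, is exactly how the paper treats it. Your outline does track the standard proofs correctly: the link/Gauss--Bonnet argument for condition (2) and the vertex dichotomy, the small-face-number case check for (1), and for sufficiency either Andreev's continuity/properness method (as repaired in \cite{RHD}) or the Koebe--Andreev--Thurston orthogonal circle-pattern route. Two cautions on the necessity half, which you present as routine. First, ``three pairwise orthogonal planes meet in a point of $\mathbb H^3$'' is correct (the Gram matrix of the normals is the identity, so its orthogonal complement is timelike), but concluding that this point is a \emph{vertex of the polyhedron} --- which is what contradicts the combinatorics in (3) and the prismatic $3$-circuit case of (4) --- needs Vinberg's/Andreev's lemma on acute-angled polyhedra (planes of faces of an acute-angled polyhedron that have a common point meet at a vertex), not just the linear-algebra fact. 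Second, the prismatic $4$-circuit case is not ``the same over-determination'': four pairwise relations give angle sum exactly $2\pi$ around the circuit, and the contradiction comes from a degeneration/annulus argument (the four planes would share a common perpendicular plane or the polyhedron would split), which is a genuinely different computation from the $3$-circuit case. Since sufficiency is only quoted, these are the only substantive points where your sketch understates what must be imported; as a plan consistent with the paper's use of the theorem, it is fine.
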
	

Here, for a planar graph $G$ and its dual graph $G^*$, a \textit{$k$-circuit} is a simple closed curve composed of $k$ edges in $G^*$. A \textit{prismatic $k$-circuit} is a $k$-circuit $\gamma$ such that no two edges of $G$ corresponding to edges traversed by $\gamma$ share a common vertex.

\subsection{Volume of a birectangular hyperbolic tetrahedron} A tetrahedron in $\mathbb H^3$ is called birectangular (or an orthoscheme) if its vertices can be labeled as $A, B, C, D$ such that edge $AB$ is orthogonal to face $BCD$, and face $ABC$ is orthogonal to edge $CD$. In this case, the following dihedral angles are equal: $\angle AC= \angle BC= \angle BD = \pi/2$. The remaining dihedral angles are denoted by $\angle AB = \alpha$, $\angle AD = \beta$, $\angle CD = \gamma$, where $\alpha + \beta \geq \pi/2$ and $\beta + \gamma \geq \pi/2$. Such a birectangular tetrahedron is denoted by $R(\alpha, \beta, \gamma)$. A formula for its volume was derived in~\cite{Ke1}: 
\begin{equation}
\begin{gathered}
	\operatorname{vol} (R(\alpha, \beta, \gamma)) = \frac{1}{2} \left[ \Lambda(\alpha + \delta) + \Lambda(\alpha - \delta) + \Lambda \left( \frac{\pi}{2} + \beta - \delta\right) + \Lambda \left( \frac{\pi}{2} - \beta + \delta \right) \right.  \\ 
	\left. + \Lambda (\gamma + \delta) - \Lambda(\gamma - \delta) + 2 \Lambda \left( \frac{\pi}{2} - \delta \right) \right], 
\end{gathered} \label{eqnKel}
\end{equation}
where
$
0 \leq \delta = \arctan \frac{\sqrt{\cos^2 \beta - \sin^2 \alpha \sin^2 \gamma}}{\cos \alpha \, \cos \gamma} < \frac{\pi}{2}.
$

Using formula~(\ref{eqnKel}), we compute the covolume of the group $\Gamma(\Delta_{3,4,4})$, whose Coxeter diagram is shown in Fig.~\ref{figCo}\,(a), and the group $\Gamma(\Delta_{4,4,4})$, whose Coxeter diagram is shown in Fig.~\ref{figCo2}. Namely, since $\Delta_{3,4,4} = R(\pi/3, \pi/4, \pi/4)$, we have $\operatorname{vol} (\Delta_{3,4,4}) = \frac{1}{6} \Lambda(\pi/4)$, and similarly, since $\Delta_{4,4,4} = R(\pi/4, \pi/4, \pi/4)$, we have $\operatorname{vol} (\Delta_{4,4,4}) = \frac{1}{2} \Lambda(\pi/4)$.  

\subsection{Compact right-angled polyhedra}
Since the conditions for realizing a combinatorial polyhedron as a compact right-angled polyhedron in $\mathbb H^3$ were first formulated by Pogorelov in~\cite{Pog}, these polyhedra are sometimes called \textit{Pogorelov polyhedra}. 

Let us describe an important infinite family of compact right-angled polyhedra. For $n \geq 5$, consider the $(2n+2)$-hedron $L_n$, whose top and bottom bases are $n$-gons, and whose lateral surface consists of two cycles of $n$ pentagons~\cite{Ve1}, in particular, $L_5$ is a dodecahedron, see Fig.~\ref{figL5}\,(a). By Theorem~\ref{theorem2.1}, $L_n$ can be realized in $\mathbb H^3$ as a compact right-angled polyhedron $\mathcal L_n$. Following~\cite{Ve1}, the polyhedra $\mathcal L_n$ are called \textit{L{\"o}bell polyhedra}, and the three-dimensional hyperbolic manifolds corresponding to torsion-free subgroups of index eight in $\Gamma(\mathcal L_n)$, $n \geq 5$, are called \textit{L{\"o}bell manifolds}, see~\cite{Ve4}. 
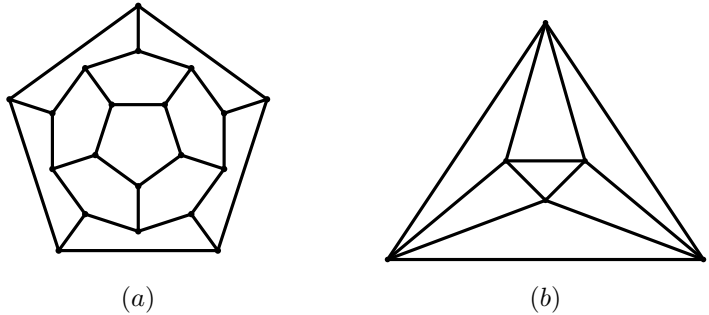
\begin{figure}[ht]	
	\begin{center} 
\begin{tikzpicture}[scale=0.6, rotate=180] 
   \coordinate (O) at (0,0);
	\foreach \angle [count=\i] in {234,306,...,594} {
		\coordinate (V\i) at (\angle:1);}
	\draw[very thick, black] (V1) -- (V2) -- (V3) -- (V4) -- (V5) -- cycle;
	\foreach \angle [count=\i] in {18,54,...,378} {
		\coordinate (U\i) at (\angle:2);}
	\draw[very thick, black] (U1) -- (U2) -- (U3) -- (U4) -- (U5) -- (U6) 
	-- (U7) -- (U8) -- (U9) -- (U10) -- cycle;
	\foreach \angle [count=\i] in {54,126,...,414} {
		\coordinate (W\i) at (\angle:3);}
	\draw[very thick, black] (W1) -- (W2) -- (W3) -- (W4) -- (W5) -- cycle;
	\draw[very thick, black] (V2) -- (U9);
	\draw[very thick, black] (V1) -- (U7);
	\draw[very thick, black] (V3) -- (U1);
	\draw[very thick, black] (V4) -- (U3);
	\draw[very thick, black] (V5) -- (U5);
	\draw[very thick, black] (W4) -- (U8);
	\draw[very thick, black] (W1) -- (U2);
	\draw[very thick, black] (W2) -- (U4);
	\draw[very thick, black] (W3) -- (U6);
	\draw[very thick, black] (W5) -- (U10);	
	\foreach \v in {V1,V2,V3,V4,V5,W1,W2,W3,W4,W5,U1,U2,U3,U4,U5,U6,U7,U8,U9,U10} {
		\fill[black] (\v) circle (2pt);}
		\node[] at (0.,3.5) {$(a)$};
\end{tikzpicture}
\qquad \qquad 
\begin{tikzpicture} [scale=0.525]
	\draw[very thick, black] (-3.00,-0.50) -- (1.00,5.50);
	\draw[very thick, black] (1.00,5.50) -- (5.00,-0.50);
	\draw[very thick, black] (5.00,-0.50) -- (-3.00,-0.50);
	\draw[very thick, black] (-3.00,-0.50) -- (0.00,2.0);
	\draw[very thick, black] (0.00,2.0) -- (1.00,5.50);
	\draw[very thick, black] (2.00,2.0) -- (1.00,5.50);
	\draw[very thick, black] (2.00,2.0) -- (5.00,-0.50);
	\draw[very thick, black] (-3.00,-0.50) -- (1.00,1.0);
	\draw[very thick, black] (1.00,1.0) -- (5.00,-0.50);
	\draw[very thick, black] (1.00,1.0) -- (2.00,2.00);
	\draw[very thick, black] (2.00,2.00) -- (0.00,2.0);
	\draw[very thick, black] (1.00,1.00) -- (0.00,2.0);
	\fill[black] (1.00,5.5) circle (2pt); 
	\fill[black] (-3.00,-0.50) circle (2pt);
	\fill[black] (5.00,-0.50) circle (2pt);
	\fill[black] (2.00,2.0) circle (2pt);
	\fill[black] (0.00,2.00) circle (2pt);
	\fill[black] (1.00,1.00) circle (2pt);	
	\node[] at (1.,-1.5) {$(b)$};
\end{tikzpicture}
	\end{center} \caption{Dodecahedron $L_5$ and octahedron $A_3$.} \label{figL5} 	
\end{figure} 

\begin{theorem}~\cite[Cor.~9.2]{In1}, \label{th3}
The compact right-angled hyperbolic polyhedron of minimal volume is the dodecahedron $\mathcal L_5$, and the next smallest polyhedron is $\mathcal L_6$. 
\end{theorem}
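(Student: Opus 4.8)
The statement is Inoue's~\cite[Cor.~9.2]{In1}, and the plan is to reconstruct it by turning the minimization into a finite comparison of combinatorial types, using a purely combinatorial bound on the number of faces, Atkinson's metric lower bound, and a composition calculus for right-angled polyhedra. First I would fix the combinatorics. By Theorem~\ref{theorem2.1} a compact right-angled polyhedron $\mathcal P$ is simple (every vertex has degree three), and Andreev's conditions~(3)--(4) rule out faces with fewer than five sides, so every face is a $k$-gon with $k\geq 5$. Writing $F,E,V$ for the numbers of faces, edges and vertices, the degree-three condition gives $2E=3V$, whence $E=3F-6$ by Euler's formula, while $2E=\sum_{\text{faces}}k\geq 5F$ forces $F\geq 12$ with equality exactly when all faces are pentagons. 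The unique simple polyhedron with twelve pentagonal faces is the dodecahedron, which is realizable by Theorem~\ref{theorem2.1} and coincides with $\mathcal L_5$; thus $\mathcal L_5$ is the unique type of minimal face number, and an orthoscheme decomposition using~(\ref{eqnKel}) gives $\operatorname{vol}(\mathcal L_5)=4.306\ldots$. In passing I would record the small-face census: $F=13$ is impossible (the graph would be a simple polyhedron with twelve pentagons and one hexagon, the non-existent ``$C_{22}$ fullerene''), while $F=14$ is realized by the barrel with two hexagonal caps and twelve pentagons, namely $\mathcal L_6$, with $\operatorname{vol}(\mathcal L_6)=6.023\ldots$.

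Next I would make the set of competitors finite. Atkinson's lower bound~\cite{Atk} for compact right-angled polyhedra is linear in the number of faces, so $\operatorname{vol}(\mathcal P)>\operatorname{vol}(\mathcal L_6)$ once $F$ exceeds an explicit constant $F_0$; this leaves only the finitely many combinatorial types with $12\leq F<F_0$. To cut this list down I would invoke Inoue's composition operation. Cutting $\mathcal P$ along a prismatic circuit separates it into two right-angled polyhedra capped by new faces; for a prismatic $5$-circuit a face count gives $F(\mathcal P_1)+F(\mathcal P_2)=F+7$, so (both pieces carrying at least twelve faces) an essential splitting forces $F\geq 17$, and an analogous count for longer circuits confirms that $\mathcal L_5$ and $\mathcal L_6$ are themselves indecomposable. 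The key input is the volume inequality under composition, of the shape $\operatorname{vol}(\mathcal P_1\circ\mathcal P_2)\geq\operatorname{vol}(\mathcal P_1)+\operatorname{vol}(\mathcal P_2)-\operatorname{vol}(\mathcal L_5)$, which shows that a composite polyhedron is strictly larger than each of its strictly smaller pieces and, by induction, that the volume minimum at each level is attained by an indecomposable polyhedron.

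The theorem would then follow by computing, via the rigidity of $\mathcal P$ (the uniqueness clause of Theorem~\ref{theorem2.1}, which makes $\operatorname{vol}(\mathcal P)$ an invariant of the combinatorial type) and the orthoscheme formula~(\ref{eqnKel}), the volumes of the finitely many remaining indecomposable types, together with the few ``dodecahedral'' composites built from copies of $\mathcal L_5$ that the composition inequality alone does not separate, and then verifying $\operatorname{vol}(\mathcal L_5)<\operatorname{vol}(\mathcal L_6)<\operatorname{vol}(\mathcal P)$ for every other type.

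I expect the main obstacle to be quantitative rather than conceptual. The composition inequality must be established with a defect small enough that the smallest genuine composites (beginning with $\mathcal L_5\circ\mathcal L_5$ at $F=17$) are pushed strictly above $\operatorname{vol}(\mathcal L_6)$; since iterating the crude bound telescopes to the vacuous estimate $\operatorname{vol}(\mathcal P)\geq\operatorname{vol}(\mathcal L_5)$ for trees of dodecahedra, these composites genuinely require a direct volume estimate. Equally delicate is controlling the numerical error in the finitely many evaluations of~(\ref{eqnKel}) so that the strict separations $\operatorname{vol}(\mathcal L_5)<\operatorname{vol}(\mathcal L_6)<\cdots$ are rigorous rather than merely numerical, and ensuring that the enumeration of simple polyhedra with all faces of size $\geq 5$ in the range $12\leq F<F_0$ is exhaustive, since a single overlooked type could in principle undercut $\mathcal L_6$.
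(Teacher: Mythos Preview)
The paper does not prove this theorem at all: Theorem~\ref{th3} is quoted verbatim from Inoue~\cite[Cor.~9.2]{In1} and used as a black box in the proof of Lemma~\ref{lemma3.1}, case~(2), where only the single inequality $\operatorname{vol}(\mathcal P)\geq\operatorname{vol}(\mathcal L_5)>G$ is needed. There is therefore no ``paper's own proof'' to compare your attempt against; what you have written is a proposed reconstruction of Inoue's external result, not of anything in the present paper.

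As a reconstruction, your outline is broadly in the right spirit (combinatorial lower bound $F\geq 12$, the $F=13$ fullerene obstruction, a decomposition calculus, and a finite residual check), but it diverges from Inoue's actual argument in one load-bearing place. Inoue does not use a defect inequality of the form $\operatorname{vol}(\mathcal P_1\circ\mathcal P_2)\geq\operatorname{vol}(\mathcal P_1)+\operatorname{vol}(\mathcal P_2)-\operatorname{vol}(\mathcal L_5)$; rather, he proves a structural theorem that every compact right-angled polyhedron is assembled from L\"obell blocks $\mathcal L_n$ via composition and edge surgery, together with genuine monotonicity of volume under these operations (Schl\"afli-type arguments). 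This makes the ordering $\operatorname{vol}(\mathcal L_5)<\operatorname{vol}(\mathcal L_6)<\cdots$ and the placement of composites above $\operatorname{vol}(\mathcal L_6)$ fall out without an Atkinson-style cutoff and without enumerating all simple $\geq 5$-gonal types up to some $F_0$ in the thirties. Your route could in principle be completed, but as you yourself note, the enumeration and the rigorous control of the numerical comparisons are substantial tasks that your sketch defers rather than resolves; and the specific composition inequality you state would itself require proof, since it is not the inequality Inoue establishes.
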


Below is a formula expressing the volumes of right-angled hyperbolic polyhedra $\mathcal L_n$ in terms of the Lobachevsky function. 

\begin{theorem}~\cite{Ve2} For $n \geq 5$, the following equality holds: 
$$
\operatorname{vol} (\mathcal L_n) = \frac{n}{2} \left( 2 \Lambda(\theta_n) + \Lambda \left(\theta_n + \frac{\pi}{n}\right) + \Lambda \left(\theta_n - \frac{\pi}{n}\right) - \Lambda \left( 2 \theta_n - \frac{\pi}{2} \right) \right),
$$
where $\theta_n = \frac{\pi}{2} - \arccos \left( \frac{1}{2 \cos (\pi/n)} \right)$.
\end{theorem}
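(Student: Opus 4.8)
\section*{Proof proposal}

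The plan is to exploit the symmetry of the right-angled polyhedron $\mathcal L_n$ to reduce the computation to finitely many birectangular tetrahedra, and then to apply formula~(\ref{eqnKel}). By Theorem~\ref{theorem2.1} the realization of the L\"obell polyhedron in $\mathbb H^3$ is unique up to isometry, so it inherits all combinatorial symmetries of $L_n$ as hyperbolic isometries; in particular the rotation by $2\pi/n$ about the axis $\ell$ through the centers of the two $n$-gonal faces preserves $\mathcal L_n$. Writing $O$ for a point of $\ell$ fixed by the full symmetry group, the $n$ rotational copies of a single wedge $W$ (one $n$-th of $\mathcal L_n$) tile the polyhedron, whence $\operatorname{vol}(\mathcal L_n) = n\,\operatorname{vol}(W)$. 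It therefore suffices to show that $\operatorname{vol}(W)$ equals one half of the bracketed expression in the statement.

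First I would cut $W$ into orthoschemes. Using the reflective symmetries of $\mathcal L_n$ (the vertical plane bisecting the wedge and the horizontal plane interchanging the two bands of pentagons), together with the perpendiculars dropped from $O$ and from the feet of previous perpendiculars onto the appropriate faces and edges, one subdivides $W$ into a small, fixed number of birectangular tetrahedra $R(\alpha,\beta,\gamma)$. Each such tetrahedron has three right dihedral angles by construction, so its volume is given by~(\ref{eqnKel}), and $\operatorname{vol}(W)$ is the resulting signed sum.

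The decisive step is to compute the dihedral angles of these orthoschemes and the auxiliary parameter $\delta$ of~(\ref{eqnKel}), and to show that they are expressible through the half-wedge angle $\pi/n$ and the angle $\theta_n$. This I would extract from the right-angled condition: the faces of $\mathcal L_n$ meet pairwise at $\pi/2$, so the relevant entries of the Gram matrix of the bounding hyperplanes are known, and applying the standard hyperbolic right-triangle relations inside the subdivision yields the defining identity $\sin\theta_n = \tfrac{1}{2\cos(\pi/n)}$, that is $\theta_n = \tfrac{\pi}{2} - \arccos\bigl(\tfrac{1}{2\cos(\pi/n)}\bigr)$. Locating the feet of the perpendiculars inside the polyhedron and verifying that the subdivision tetrahedra are genuinely birectangular with these parameters is the principal obstacle; the uniqueness part of Theorem~\ref{theorem2.1} is what guarantees that the metric data are determined by the combinatorics alone, and hence by $n$.

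Finally, I would substitute the computed parameters into~(\ref{eqnKel}) for each constituent tetrahedron and add the contributions. Many of the seven Lobachevsky terms produced per orthoscheme cancel or merge across the pieces of $W$; the only tools needed are the elementary identities that $\Lambda$ is odd and $\pi$-periodic, together with the duplication formula $\Lambda(2\psi) = 2\Lambda(\psi) + 2\Lambda(\psi + \tfrac{\pi}{2})$, the last of which is responsible for the appearance of the term $\Lambda(2\theta_n - \tfrac{\pi}{2})$. Collecting the surviving terms gives $2\,\operatorname{vol}(W) = 2\Lambda(\theta_n) + \Lambda(\theta_n + \tfrac{\pi}{n}) + \Lambda(\theta_n - \tfrac{\pi}{n}) - \Lambda(2\theta_n - \tfrac{\pi}{2})$, and multiplication by $n/2$ yields the claimed formula. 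This concluding simplification is routine once the parameters are known, so the weight of the argument lies in the geometric determination of $\theta_n$.
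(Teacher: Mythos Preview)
The paper does not actually prove this theorem: it is quoted with a citation to~\cite{Ve2} and used only as background, so there is no ``paper's own proof'' to compare against. Your outline is the standard route and is essentially the one taken in the original reference: exploit the dihedral symmetry of $\mathcal L_n$ to reduce to a fundamental wedge, cut that wedge into birectangular tetrahedra by dropping successive perpendiculars from the center, evaluate each piece with the orthoscheme formula~(\ref{eqnKel}), and simplify using the oddness, $\pi$-periodicity, and duplication identity for $\Lambda$.

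What remains genuinely unspecified in your proposal is the part you yourself flag as the ``principal obstacle'': the explicit description of the orthoscheme decomposition of the wedge and the verification that the non-right dihedral angles come out as claimed. In practice one uses the full symmetry group of order $4n$ (the $n$-fold rotation together with the vertical mirror through a lateral edge and the horizontal mirror swapping the two bands of pentagons), so the fundamental piece is $1/(4n)$ of $\mathcal L_n$; this piece is then split into a small fixed number of orthoschemes whose essential angles are $\pi/n$, $\pi/2$, and an angle determined by the right-angled Gram-matrix constraint, which is exactly where the relation $\sin\theta_n = 1/(2\cos(\pi/n))$ enters. Your plan is correct, but to turn it into a proof you would need to name the actual tetrahedra (vertices and dihedral angles) and carry out the Lobachevsky-function bookkeeping rather than assert that the cancellations occur.
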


Approximate values are $\operatorname{vol} (\mathcal L_5)=4.306207$ and $\operatorname{vol} (\mathcal L_6) = 6.023046$. It is easy to see that  $\operatorname{vol}(\mathcal L_n)$ is an increasing function of $n$, see~\cite[Th.~4.2]{In1}, and $\lim_{n\to \infty} \frac{\operatorname{vol} (\mathcal L_n)}{n} = \frac{5}{4} v_{tet}$, see~\cite[Prop~2.10]{MPVe}. 
The paper~\cite{In2} lists the first $825$ volumes of compact right-angled hyperbolic polyhedra, along with images of the first hundred corresponding polyhedra. Volume computations were performed using the computer program Orb~\cite{Hea}.  

Upper and lower bounds for the volumes of compact right-angled polyhedra in terms of their number of vertices were obtained by Atkinson in~\cite{Atk}.
\begin{theorem} \cite[Th.~2.3]{Atk} \label{th2.4}
Let $\mathcal P$ be a compact right-angled hyperbolic polyhedron with $V$ vertices. Then
\begin{equation}
	\frac{v_{oct}}{32} (V-8) \leq \operatorname{vol}(\mathcal P) < \frac{5v_{tet}}{8} (V - 10). \label{eqn2}
\end{equation}
Moreover, there exists a sequence of compact right-angled polyhedra $\mathcal P_i$ with $V_i$ vertices such that $\operatorname{vol}(\mathcal P_i) / V_i$ tends to $\frac{5}{8} v_{tet}$ as $i$ tends to infinity. 
\end{theorem}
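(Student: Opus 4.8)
The plan is to reduce both inequalities to the combinatorics of $\mathcal P$ supplemented by one geometric input on each side. Since $\mathcal P$ is compact and right-angled, Theorem~\ref{theorem2.1} forces every vertex to have degree three, so $\mathcal P$ is simple: from $3V=2E$ and Euler's formula $V-E+F=2$ one gets $E=\tfrac32 V$ and $F=\tfrac12 V+2$. Each face is a right-angled hyperbolic $k$-gon, so $k\ge 5$ and, by Gauss--Bonnet, its area is $\tfrac{\pi}{2}(k-4)$. Summing and using $\sum_f k_f=2E$ yields the identity
\begin{equation*}
\operatorname{Area}(\partial\mathcal P)=\sum_f \tfrac{\pi}{2}(k_f-4)=\tfrac{\pi}{2}(2E-4F)=\tfrac{\pi}{2}(V-8),
\end{equation*}
which already accounts for the shift $V-8$ in the lower bound.

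For the lower bound I would regard $\mathcal P$ as a hyperbolic $3$-orbifold with totally geodesic boundary equal to the union of its faces (or pass to a torsion-free finite-index manifold cover, where volume and boundary area scale by the same index). Applying Miyamoto's sharp lower bound for the volume of a hyperbolic $3$-manifold with geodesic boundary in terms of the boundary area --- whose extremal model is built from the regular ideal octahedron, and which quantitatively reads $\operatorname{vol}\ge \tfrac{v_{oct}}{16\pi}\operatorname{Area}(\partial)$ --- and inserting the identity above gives $\operatorname{vol}(\mathcal P)\ge \tfrac{v_{oct}}{16\pi}\cdot\tfrac{\pi}{2}(V-8)=\tfrac{v_{oct}}{32}(V-8)$. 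The only care needed here is transferring Miyamoto's hypotheses and extremal constant to the right-angled orbifold setting.

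The upper bound cannot come from the area identity, since volume is not controlled above by boundary area, and I would instead argue by induction on $V$ along prismatic circuits. By condition~(4) of Theorem~\ref{theorem2.1} the shortest prismatic circuits have length five; cutting $\mathcal P$ along the totally geodesic pentagon perpendicular to the five faces met by such a circuit splits it into two compact right-angled polyhedra $\mathcal P_1,\mathcal P_2$ with $\operatorname{vol}(\mathcal P)=\operatorname{vol}(\mathcal P_1)+\operatorname{vol}(\mathcal P_2)$ and, since each piece gains the five vertices of the new pentagon, $V_1+V_2=V+10$. As both $V_i<V$, the inductive hypothesis gives
\begin{equation*}
\operatorname{vol}(\mathcal P)<\tfrac{5v_{tet}}{8}(V_1-10)+\tfrac{5v_{tet}}{8}(V_2-10)=\tfrac{5v_{tet}}{8}(V-10),
\end{equation*}
the shift $-10$ surviving precisely because the circuit has length five. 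This reduces everything to the indecomposable polyhedra admitting no prismatic $5$-circuit, and I expect \emph{this} to be the main obstacle: there are infinitely many such polyhedra, with volume-per-vertex ranging up to the sharp slope, so the base case is not a finite check but demands a direct geometric volume estimate --- for instance via an orthoscheme decomposition and formula~(\ref{eqnKel}) --- that produces the exact constant $\tfrac58 v_{tet}$ rather than a cruder bound.

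Finally, to show the slope is optimal I would exhibit an explicit family $\mathcal P_i$ with $\operatorname{vol}(\mathcal P_i)/V_i\to\tfrac58 v_{tet}$ and verify the limit from the Lobachevsky-function volume formula. Such a family must be strictly denser than the L\"obell polyhedra, whose density $\operatorname{vol}(\mathcal L_n)/V_n$ tends only to $\tfrac5{16}v_{tet}$ by the limit $\operatorname{vol}(\mathcal L_n)/n\to\tfrac54 v_{tet}$ quoted above together with $V_n=4n$; so the extremal examples are to be sought among the indecomposable polyhedra of the previous paragraph.
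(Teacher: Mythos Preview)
The paper does not prove Theorem~\ref{th2.4}; it is quoted verbatim from Atkinson~\cite[Th.~2.3]{Atk} as background and is used only through the lower bound, so there is no ``paper's own proof'' to compare against. Any assessment therefore has to be against Atkinson's original argument rather than anything in this paper.

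That said, your outline has real gaps. For the lower bound, the constant you attribute to Miyamoto is not Miyamoto's constant: his sharp inequality for hyperbolic $3$--manifolds with totally geodesic boundary is calibrated on regular truncated tetrahedra, not on the ideal octahedron, and does not read $\operatorname{vol}\ge \tfrac{v_{oct}}{16\pi}\operatorname{Area}(\partial)$. Atkinson's lower bound is obtained by a different mechanism (an inductive edge--contraction/decomposition argument that bottoms out at the octahedron), and you cannot recover the specific factor $v_{oct}/32$ by plugging the area identity into Miyamoto. For the upper bound, your prismatic--$5$--circuit induction is essentially Inoue's decomposition into L\"obell ``atoms'', and you have correctly located the obstruction: the base class is the infinite family of L\"obell polyhedra themselves, so the induction buys nothing unless you already have the inequality for all $\mathcal L_n$ --- which is exactly the content you are trying to prove. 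Atkinson's actual route to the upper bound passes through an associated \emph{ideal} right-angled polyhedron (built from $\mathcal P$ by a combinatorial doubling) together with the known $v_{oct}/2$ upper slope in Theorem~\ref{th5}, rather than through prismatic cuts. Finally, your observation that the L\"obell family only reaches density $\tfrac{5}{16}v_{tet}$ is correct, and it already tells you that the extremal sequence is not to be sought among indecomposable polyhedra in Inoue's sense; Atkinson produces the asymptotically sharp family by a separate explicit construction.
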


Note that in virtue of Theorem~\ref{theorem2.1} it is assumed in Theorem~\ref{th2.4} that $V \geq 20$. In~\cite{EgVe}, the upper bound (\ref{eqn2}) was improved for compact right-angled hyperbolic polyhedra with $V \geq 24$ vertices, and in~\cite{ABEV}, for those with $V > 80$.

\subsection{Ideal right-angled polyhedra} A polyhedron in $\mathbb H^3$ is called \textit{ideal} if all its vertices are ideal. 

Let us describe an important family of ideal right-angled polyhedra. For $n \geq 3$ consider a $(2n+2)$-hedron with top and bottom $n$-gonal bases and a lateral surface consisting of two layers of $n$ triangles, where four edges meet at each vertex. We call such a polyhedron \textit{$n$-antiprism} and denote by $A_n$. Note that $A_3$ is an octahedron, see Fig.~\ref{figL5}\,(b).

By Theorem~\ref{theorem2.1}, for any $n \geq 3$ the polyhedron $A_n$ can be realized in $\mathbb H^3$ as an ideal right-angled polyhedron $\mathcal A_n$. It is shown in~\cite[Prop.~5]{Kol} that if a polyhedron has the minimal number of faces among all ideal right-angled polyhedra in $\mathbb H^3$ with at least one $n$-gonal face, then it is the antiprism $\mathcal A_n$.

Below is a formula expressing volumes of polyhedra $\mathcal A_n$ in terms of the Lobachevsky function. 

\begin{theorem}~\cite{Thu} For $n \geq 3$, the following equality holds: 
\begin{equation}
	\operatorname{vol} (\mathcal A_n) = 2n \left[ \Lambda \left( \frac{\pi}{4} + \frac{\pi}{2n} \right) + \Lambda \left( \frac{\pi}{4} - \frac{\pi}{2n} \right) 
	\right]. \label{eqnantiprism}
\end{equation}
\end{theorem}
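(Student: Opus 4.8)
The plan is to evaluate $\operatorname{vol}(\mathcal A_n)$ by decomposing $\mathcal A_n$ into pieces of known volume and summing, using Milnor's formula for the volume of an ideal tetrahedron: an ideal tetrahedron in $\mathbb H^3$ has three dihedral angles $\alpha,\beta,\gamma$ at its three pairs of opposite edges, these satisfy $\alpha+\beta+\gamma=\pi$, and its volume equals $\Lambda(\alpha)+\Lambda(\beta)+\Lambda(\gamma)$ (see~\cite{Mil}). The first step is to read the right-hand side of~(\ref{eqnantiprism}) in these terms. Since $\Lambda(\pi/2)=0$ and
$$
\left(\frac{\pi}{4}+\frac{\pi}{2n}\right)+\left(\frac{\pi}{4}-\frac{\pi}{2n}\right)+\frac{\pi}{2}=\pi ,
$$
the bracketed expression in~(\ref{eqnantiprism}) equals $\operatorname{vol}(T_n)$, where $T_n$ is the ideal tetrahedron with dihedral angles $\pi/2$, $\frac{\pi}{4}+\frac{\pi}{2n}$, $\frac{\pi}{4}-\frac{\pi}{2n}$; equivalently $T_n$ has shape parameter $z_n=i\tan\!\left(\frac{\pi}{4}+\frac{\pi}{2n}\right)$, i.e. vertices $0,\infty,1,z_n$ in the upper half-space model. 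Thus proving~(\ref{eqnantiprism}) amounts to showing that $\mathcal A_n$ is scissors congruent to $2n$ copies of $T_n$. (This is a volume identity rather than a literal triangulation: already for $n=3$ the ideal right-angled octahedron $\mathcal A_3$ decomposes into only $3n-5=4$ ideal tetrahedra, not $6$.)

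To carry this out I would first fix a convenient realization. By Theorem~\ref{theorem2.1}, $\mathcal A_n$ exists and is unique up to isometry, so we may place it in the upper half-space model with its order-$n$ rotational symmetry acting as $z\mapsto e^{2\pi i/n}z$ about the vertical axis over the origin; the whole antiprismatic symmetry group of order $4n$ is then realized isometrically. Using the order-$2n$ rotatory reflection one normalizes the $2n$ ideal vertices to
$$
a_k=\rho\,e^{2\pi i k/n},\qquad b_k=\rho^{-1}e^{i\pi(2k+1)/n},\qquad k=0,\dots,n-1 ,
$$
where $\rho>1$ is the unique value for which all dihedral angles equal $\pi/2$; one finds $\rho$, and the whole dihedral structure, by writing out the orthogonality conditions for the bounding planes, equivalently by demanding that the horospherical cross-section at each ideal vertex be a Euclidean rectangle.

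One then computes $\operatorname{vol}(\mathcal A_n)$ by an explicit decomposition and rearranges. Two natural choices: (a) triangulate $\mathcal A_n$ by coning from a single ideal vertex into $3n-5$ ideal tetrahedra, read off their shape parameters from the rectangular-cusp condition, and sum their Milnor volumes; or (b) cone from the center of symmetry and refine to the barycentric subdivision into $16n$ birectangular tetrahedra, computing each via formula~(\ref{eqnKel}). In either case the resulting sum of Lobachevsky values is brought to the form $2n\bigl[\Lambda(\pi/4+\pi/(2n))+\Lambda(\pi/4-\pi/(2n))\bigr]$ using only standard identities for $\Lambda$: oddness, $\pi$-periodicity, and the distribution relation $\Lambda(m\theta)=m\sum_{j=0}^{m-1}\Lambda(\theta+j\pi/m)$ --- in particular its case $\Lambda(2\theta)=2\Lambda(\theta)-2\Lambda(\pi/2-\theta)$.

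The main obstacle is the explicit trigonometry: solving the orthogonality system for $\rho$ and for the shape parameters of the pieces as functions of $n$, and then the combinatorial bookkeeping required to collapse the raw sum of Lobachevsky values to the compact two-term expression. As a sanity check, for $n=3$ this should reproduce $\operatorname{vol}(\mathcal A_3)=6\bigl[\Lambda(5\pi/12)+\Lambda(\pi/12)\bigr]=8\Lambda(\pi/4)=v_{oct}$, which follows from the triplication identity $\Lambda(\pi/12)+\Lambda(5\pi/12)=\tfrac{4}{3}\Lambda(\pi/4)$.
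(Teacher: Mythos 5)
The paper itself offers no proof of this statement---it is quoted from Thurston's notes \cite{Thu}---so your argument has to stand on its own, and as written it is a programme rather than a proof. What you actually establish is the preliminary reformulation and two consistency checks: the bracket in (\ref{eqnantiprism}) is indeed the Milnor volume of the ideal tetrahedron with dihedral angles $\pi/2$, $\pi/4+\pi/(2n)$, $\pi/4-\pi/(2n)$ (your shape parameter $z_n=i\tan(\pi/4+\pi/(2n))$ is consistent with this), the count $3n-5$ for a cone triangulation is right, and the $n=3$ check against $v_{oct}=8\Lambda(\pi/4)$ is correct. But every step that carries the content of the theorem is deferred: you never solve the orthogonality (rectangular horospherical cross-section) conditions, so the parameter $\rho$---that is, the actual right-angled realization of $\mathcal A_n$---is never determined; you never exhibit the shape parameters of the $3n-5$ tetrahedra in option (a) (they are not mutually isometric and depend on $\rho$ in a way you have not written down), nor the angle data of the $16n$ pieces in option (b) (where one must also verify that the subdivision really consists of non-degenerate, correctly oriented orthoschemes before applying a formula like (\ref{eqnKel})); and the final collapse of the resulting sum of Lobachevsky values to $2n\bigl[\Lambda(\pi/4+\pi/(2n))+\Lambda(\pi/4-\pi/(2n))\bigr]$ is precisely the identity to be proved, so asserting that it follows ``using only standard identities'' is a restatement of the goal, not an argument. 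Your own closing sentence concedes that the ``explicit trigonometry'' and ``combinatorial bookkeeping'' are missing---but that is exactly the part a proof must contain.

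To close the gap you need the explicit data as functions of $n$: impose the right-angle conditions on the bounding hemispheres of your normalized vertex configuration to obtain $\rho$ in closed form, read off the cusp rectangles and hence the shapes of the tetrahedra in decomposition (a), and then actually verify the $\Lambda$-identity; alternatively, reduce by the order-$4n$ antiprismatic symmetry (which acts by isometries, by the uniqueness clause of Theorem~\ref{theorem2.1}) to a single piece with one ideal vertex and evaluate that piece by an orthoscheme computation in the spirit of \cite{Ke1}---note that the paper itself uses this kind of symmetry reduction in reverse when it computes $\operatorname{vol}(P_{(3,4)})=\frac14\operatorname{vol}(\mathcal A_4)$. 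Until one of these computations is carried through (or you simply invoke Thurston's derivation in \cite{Thu}, as the paper does), equality (\ref{eqnantiprism}) has not been proved.
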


Upper and lower bounds for the volumes of ideal right-angled polyhedra in terms of their number of vertices were obtained by Atkinson in~\cite{Atk}.
\begin{theorem} \cite[Th.~2.2]{Atk} \label{th5}
Let $\mathcal P$ be an ideal right-angled hyperbolic polyhedron with $V$ vertices. Then
\begin{equation}
	\frac{v_{oct}}{4} (V-2) \leq \operatorname{vol}(\mathcal P) < \frac{v_{oct}}{2} (V - 4). \label{eqnAtkIdeal}
\end{equation}
Both inequalities become equalities if $\mathcal P$ is a regular ideal hyperbolic octahedron. Moreover, there exists a sequence of ideal right-angled polyhedra $\mathcal P_i$ with $V_i$ vertices such that $\operatorname{vol}(\mathcal P_i) / V_i$ tends to $\frac{1}{2} v_{oct}$ as $i$ tends to infinity. 
\end{theorem}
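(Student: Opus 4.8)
The plan is to establish the two inequalities separately, and then the equality case and the asymptotic family; throughout one uses the basic combinatorics of an ideal right-angled polyhedron, namely that every vertex has degree four (a degree-three ideal vertex would have a triangular horospherical link with angle sum $3\pi/2\neq\pi$), so that $2E=4V$, hence $E=2V$, and by Euler's formula $F=V+2$.

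\emph{Upper bound.} Fix an ideal vertex $v_0$ and decompose $\mathcal P$ into ideal pyramids with apex $v_0$ over the $F-4=V-2$ faces that do not contain $v_0$, then triangulate each pyramid from $v_0$; since the four faces meeting $v_0$ have at least three sides each, the identity $\sum_f|f|=2E=4V$ bounds the number of ideal tetrahedra so produced by $2(V-4)$. The naive estimate ``volume $<v_{tet}$ per tetrahedron'' is insufficient because $2v_{tet}>\tfrac12 v_{oct}$, so the heart of this half is to show that for a \emph{right-angled} polyhedron the tetrahedra of such a decomposition (or of a suitably chosen one) have volume at most $\tfrac14 v_{oct}=2\Lambda(\tfrac{\pi}{4})$ --- the volume of the ideal tetrahedron with dihedral angles $(\tfrac\pi2,\tfrac\pi4,\tfrac\pi4)$, four copies of which tile a regular ideal octahedron. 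Equivalently, one decomposes $\mathcal P$ into at most $V-4$ ideal pyramids over ideal quadrilaterals, each of volume $<\tfrac12 v_{oct}$ with equality only in the regular case (a regular ideal octahedron being the union of two such ``half-octahedra''). In either formulation, the strict inequality for $V>6$ and the fact that equality forces $\mathcal P$ to be the regular ideal octahedron come from asking when every block of the decomposition is simultaneously regular.

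\emph{Lower bound.} This is the more delicate inequality and the expected main obstacle. The function $f(V):=\tfrac14 v_{oct}\,(V-2)$ is affine and satisfies $f(V_1)+f(V_2)=f(V_1+V_2-2)$, which suggests an induction on $V$: the base case $V=6$ is the regular ideal octahedron, with $\operatorname{vol}=v_{oct}=f(6)$; for $V>6$ one cuts $\mathcal P$ along an essential totally geodesic surface meeting $\partial\mathcal P$ orthogonally into two ideal right-angled polyhedra $\mathcal P_1,\mathcal P_2$, notes that this surface is a $k$-gon with $k\ge5$ (by Andreev's conditions~(3)--(4), which forbid prismatic circuits of length $\le 4$) whose vertices lie among those of $\mathcal P$, so $V_1+V_2=V+k$ with $V_1,V_2<V$, and concludes $\operatorname{vol}(\mathcal P)=\operatorname{vol}(\mathcal P_1)+\operatorname{vol}(\mathcal P_2)\ge f(V_1)+f(V_2)=f(V+k-2)\ge f(V)$. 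The nontrivial input is that every ideal right-angled polyhedron with $V>6$ admits such an essential cut --- i.e.\ that the regular ideal octahedron is the only indecomposable one --- or, if no clean such statement holds, a direct estimate for the indecomposable cases; an alternative is to pass to a hyperbolic $3$-manifold with nonempty totally geodesic boundary obtained from a torsion-free finite-index subgroup of $\Gamma(\mathcal P)$ and apply a Miyamoto- or Lackenby-type volume bound in terms of the Euler characteristic of the boundary, whose extremal building block is again the regular ideal octahedron. In each route the equality case is the octahedron, since every essential cut makes the estimate strict.

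\emph{Equality and the asymptotic family.} For the octahedron one checks directly that $V=6$, $\operatorname{vol}=v_{oct}$, and both $\tfrac14 v_{oct}(V-2)$ and $\tfrac12 v_{oct}(V-4)$ equal $v_{oct}$; uniqueness of the equality case follows from the extremal analyses above. Finally, for the asymptotic statement one exhibits an explicit combinatorial family of ideal right-angled polyhedra $\mathcal P_i$ (for instance obtained by iterated doublings of the octahedron across faces, or another family whose face sizes grow in a controlled way) and verifies, via the Lobachevsky-function formula for their volumes, that $\operatorname{vol}(\mathcal P_i)/V_i\to\tfrac12 v_{oct}$; since $(V_i-4)/V_i\to1$, this shows simultaneously that the upper bound is asymptotically sharp and that no affine lower bound with slope exceeding $\tfrac14 v_{oct}$ can hold.
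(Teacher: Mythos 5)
The paper does not prove this statement at all: it is quoted verbatim from Atkinson \cite[Th.~2.2]{Atk}, so there is no internal proof to compare with; your proposal must therefore stand on its own, and as written it has genuine gaps at both of its central steps. For the upper bound, the combinatorial bookkeeping ($2E=4V$, $F=V+2$, at most $2(V-4)$ tetrahedra in the cone decomposition) is fine, but the entire analytic content is the assertion that each tetrahedron of the decomposition has volume at most $\tfrac14 v_{oct}$ (or that $\mathcal P$ can be cut into at most $V-4$ ideal quadrilateral pyramids). You explicitly defer this, yet it is the whole theorem: a tetrahedron obtained by coning a triangular face to a far-away ideal vertex can be close to regular, i.e.\ of volume close to $v_{tet}>\tfrac14 v_{oct}$, so the per-piece bound is not available as stated, and no construction of the quadrilateral-pyramid decomposition is given.

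The lower bound argument is not merely incomplete but rests on a false structural claim. You need every ideal right-angled polyhedron with $V>6$ to be cut, along a totally geodesic polygon orthogonal to $\partial\mathcal P$ whose vertices are vertices of $\mathcal P$, into two ideal right-angled polyhedra. First, such a cut generically crosses edges of $\mathcal P$ in interior points and so produces \emph{finite} vertices, leaving the ideal class and breaking the bookkeeping $V_1+V_2=V+k$; a plane through several ideal vertices that is orthogonal to every face it meets essentially never exists, and Andreev's conditions (3)--(4) do not produce one. Second, even granting the right framework, indecomposable examples other than the octahedron exist: the ideal right-angled antiprism $\mathcal A_4$ has $V=8$ and $\operatorname{vol}(\mathcal A_4)=4\operatorname{vol}(P_{(3,4)})\approx 6.02$, while any splitting into two ideal right-angled polyhedra would give total volume at least $2v_{oct}\approx 7.33$ (each piece has at least $6$ vertices and volume at least $v_{oct}$), so no such cut exists; the same volume obstruction rules out your induction for all antiprisms. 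The fallback reference to a Miyamoto/Lackenby-type bound is only a pointer, not an argument, and it is exactly where the sharp constant $\tfrac14 v_{oct}$ per vertex would have to be produced. Finally, your suggested asymptotic family (iterated doublings of the octahedron across faces) cannot work: doubling across a triangle gives volume per vertex bounded near $\tfrac13 v_{oct}$, not $\tfrac12 v_{oct}$, so a different explicit family is required there as well.
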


Note that in virtue Theorem~\ref{theorem2.1} it is assumed in Theorem~\ref{th5} that $V \geq 6$. In~\cite{EgVe}, the upper bound in (\ref{eqnAtkIdeal}) was improved for ideal right-angled hyperbolic polyhedra with $V \geq 8$ vertices, and in~\cite{ABEV}, for those with $V > 24$.

\subsection{Right-angled polyhedra with finite and ideal vertices}

Suppose a right-angled hyperbolic polyhedron $\mathcal P$ has $V_f$ finite and $V_\infty$ ideal vertices. Let $E$ denote its number of edges, and $F$ its number of faces. The Euler characteristic $\chi(\mathcal P)$ of $\mathcal P$ is 
$$
\chi(\mathcal P) = V_\infty + V_f - E + F = 2.
$$ 
Since each finite vertex to three ideal vertex, and each ideal vertex is incident to four edges, we have $3 V_f + 4 V_\infty = 2E$. Hence, 
\begin{equation}
F = V_\infty +  \frac{1}{2} V_f +  2, \label{eqn3}
\end{equation}
which implies that the number $V_f$ of finite vertices is always even.
Given that by condition (1) of Theorem~\ref{theorem2.1}, $F \geq 6$, we obtain 
\begin{equation}
V_{\infty} + \frac{1}{2} V_f  \geq 4. \label{eqn4}
\end{equation}

\begin{lemma} \label{rem}
Let $f$ be a face of a right-angled polyhedron $\mathcal P \subset \mathbb H^3$. If $f$ is triangular, then it contains at least two ideal vertices, and if $f$ is quadrilateral, then it contains at least one ideal vertex.
\end{lemma}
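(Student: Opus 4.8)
The plan is to argue by contradiction using condition (3) of Theorem~\ref{theorem2.1} (equivalently Andreev's theorem), which forbids three mutually incident-in-a-chain faces whose outer two are disjoint. First I would set up notation: let $f$ be a face of $\mathcal P$ and let $F_1, F_2, \dots, F_k$ be the faces adjacent to $f$, listed in cyclic order around $\partial f$, so that consecutive faces $F_i, F_{i+1}$ meet $f$ in edges sharing a common vertex $w_i$ of $f$. Since $\mathcal P$ is right-angled, $f$ meets each $F_i$ at angle $\pi/2$; the vertex $w_i$ lies on the three faces $f$, $F_i$, $F_{i+1}$, and possibly a fourth. The key dichotomy is: $w_i$ is a finite (degree-three) vertex precisely when exactly these three faces pass through it, and ideal (degree-four) when a fourth face does.

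The heart of the argument is the following observation. Consider a vertex $w_i$ of $f$ together with its two neighbours $w_{i-1}, w_i, w_{i+1}$ along $\partial f$ (assuming $f$ has at least three vertices, i.e. $k \geq 3$). Look at the triple of faces $(F_{i}, F_{i+1}, F_{i+2})$: the intersections $F_i \cap F_{i+1}$ and $F_{i+1} \cap F_{i+2}$ are edges of $\mathcal P$ (namely the edges through $w_i$ and $w_{i+1}$ not lying on $f$), and these edges have distinct endpoints provided $w_i \ne w_{i+1}$, which holds since they are distinct vertices of the face $f$. Hence condition (3) forces $F_i \cap F_{i+2} = \emptyset$. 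In particular $F_i$ and $F_{i+2}$ cannot meet at $w_{i+1}$, so $w_{i+1}$ is incident to exactly the three faces $f, F_{i+1}, F_{i+2}$ — wait, this needs care: what (3) actually rules out is that the fourth face at an ideal vertex $w_{i+1}$ could be one of the $F_j$'s with $j \le i$ or $j \ge i+3$; I would phrase it as: the fourth face at any ideal vertex of $f$ is \emph{not} among the other faces adjacent to $f$, and more usefully, combining the disjointness relations $F_i \cap F_{i+2} = \emptyset$ over all $i$ shows that non-consecutive adjacent faces never meet, which constrains how the $F_i$ can close up.

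The cleanest route is then a counting/parity argument on $\partial f$. If $f$ is a triangle with vertices $w_1, w_2, w_3$ and adjacent faces $F_1, F_2, F_3$ (so $F_i, F_{i+1}$ meet at $w_i$, indices mod $3$), I would show that if $w_i$ were finite for two values of $i$, say $w_1$ and $w_2$ finite, then the three faces at $w_1$ are $f, F_1, F_2$ and the three at $w_2$ are $f, F_2, F_3$; this forces the combinatorics of $\mathcal P$ near $f$ to be that of a tetrahedron-like corner, and one then checks either directly or via condition (4) (no prismatic circuits of length $\le 4$) that $\mathcal P$ would have fewer than six faces or a forbidden short prismatic circuit, contradicting Theorem~\ref{theorem2.1}. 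Concretely: three consecutive finite vertices on a triangular face would make $\{f, F_1, F_2, F_3\}$ close up into a tetrahedron ($F = 4 < 6$), and two finite vertices, hence one ideal vertex, is what the lemma asserts is the minimum — so I must rule out the case of exactly one finite vertex being impossible only when it would force $F<6$, i.e. rule out \emph{all three} finite and rule out exactly \emph{two} finite... Let me restate: the lemma says a triangular face has $\geq 2$ ideal vertices, i.e. $\leq 1$ finite vertex. So I must rule out a triangular face with $2$ or $3$ finite vertices. Two finite vertices $w_1, w_2$: then edges $f\cap F_2$ is the edge $w_1 w_2$... actually the shared edge construction shows $F_1 \cap F_3$ through $w_3$, and $F_1, F_2, F_3, f$ together with condition (3) applied to $(F_3, F_1, F_2)$ give $F_3 \cap F_2 = \emptyset$, contradicting that they meet along the edge through $w_2$. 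That is the contradiction. For a quadrilateral face with all four vertices finite, the same mechanism: pick the triple $(F_1, F_2, F_3)$, conclude $F_1 \cap F_3 = \emptyset$; but all-finite forces $F_1$ and $F_3$ to be "opposite" faces of a prism over $f$, and tracking the other edges produces a prismatic $3$- or $4$-circuit in $\mathcal P^*$, contradicting condition (4). The main obstacle I anticipate is being careful about degenerate cyclic configurations — ensuring the faces $F_i$ are genuinely distinct and that "distinct endpoints" genuinely holds — so I would first prove a small auxiliary fact that in a right-angled polyhedron in $\mathbb H^3$ two distinct faces meet in at most one edge and two faces adjacent to a common face $f$ along edges sharing a vertex of $f$ are distinct, after which the disjointness relations from condition (3) apply cleanly and the triangular and quadrilateral cases each close in a line or two.
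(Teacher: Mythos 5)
Your combinatorial route via the conditions of Theorem~\ref{theorem2.1} is genuinely different from the paper's proof (which is a two-line metric argument: a face of a right-angled polyhedron is a hyperbolic polygon with interior angle $\pi/2$ at finite vertices and $0$ at ideal vertices, and the angle-sum inequality $\sum\alpha_i<(n-2)\pi$ immediately gives both bounds), but as written it has a real gap exactly in the case the lemma is about. Your basic construction --- ``$F_i\cap F_{i+1}$ is the edge through $w_i$ not lying on $f$'' --- is only valid when $w_i$ is a \emph{finite} (degree-three) vertex. At an ideal (degree-four) vertex the link is a quadrilateral in which $f$, $F_i$, the fourth face, and $F_{i+1}$ occur cyclically, so $F_i$ and $F_{i+1}$ are \emph{opposite} around $w_i$ and meet only in the vertex $w_i$, not in an edge. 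Consequently, in the critical triangular subcase (exactly two finite vertices $w_1,w_2$ and one ideal vertex $w_3$), your application of condition (3) to the triple $(F_3,F_1,F_2)$ breaks down: $F_3\cap F_1$ is just the vertex $w_3$, not ``an edge through $w_3$'', so the hypothesis of (3) is not satisfied and no contradiction is obtained. What your argument actually excludes is only the all-finite triangle (the tetrahedron-closing case).

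One can try to repair it by applying (3) to $(F_1,F_2,F_3)$ with middle face $F_2$, using the two genuine edges at the finite vertices $w_1,w_2$ and the fact that $w_3\in F_1\cap F_3\neq\emptyset$; but then the hypothesis ``distinct endpoints'' can fail when the two edges off $f$ at $w_1$ and $w_2$ share their far endpoint $y$, which forces $F_2$ to be the triangle $w_1w_2y$ and requires a further case analysis on the degree of $y$, using in addition the fact (not among conditions (1)--(4)) that two faces of a convex polyhedron intersect in a connected set (at most one vertex or one edge). A similar coincidence analysis is hidden in your quadrilateral step: the $4$-circuit $F_1F_2F_3F_4$ is prismatic only after you have excluded shared endpoints of the four off-$f$ edges, which in turn leans on the triangular case. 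So the approach can be completed, but not ``in a line or two'', and the step you flagged with ``this needs care'' is precisely where the proof currently fails; the paper's angle-sum argument avoids all of this.
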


\begin{proof}
Recall that the sum of the interior angles $\alpha_1, \ldots, \alpha_n$ of an $n$-gon in $\mathbb H^2$ satisfies:
$ \sum_{i=1}^n \alpha_i < (n-2)\pi$, 
where in finite vertices of face $f$, the interior angle is $\pi/2$, and in ideal vertices, it is $0$. 
If $f$ is a triangular face with $k$ finite vertices, then $k \cdot \frac{\pi}{2} < \pi$, so $k \leq 1$. If $f$ is a quadrilateral face with $k$ finite vertices, then $k\cdot\frac{\pi}{2} < 2 \pi$, so $k \leq 3$. 
\end{proof} 

Atkinson~\cite{Atk} established the following upper and lower bounds for the volume of a right-angled hyperbolic polyhedron with at least one ideal vertex. 

\begin{theorem} \cite[Th.~2.4]{Atk} \label{th7}
Let $\mathcal P$ be a right-angled hyperbolic polyhedron with $V_\infty \geq 1$ ideal and $V_F$ finite vertices. Then the following inequalities hold: 
\begin{equation}
\frac{v_{oct}}{8} \cdot V_\infty + \frac{v_{oct}}{32} \cdot V_f - \frac{v_{oct}}{4} \leq \operatorname{vol}(\mathcal P) <  \frac{v_{oct}}{2} \cdot V_\infty + \frac{5 v_{tet}}{8} \cdot V_f - \frac{v_{oct}}{2}. \label{eqn7}
\end{equation}
\end{theorem}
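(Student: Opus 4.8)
The plan is to reduce both estimates to combinatorial data via Euler's formula and then to control volume locally, vertex by vertex. First I would record the identities that make the counting explicit. Since every finite vertex is $3$-valent and every ideal vertex is $4$-valent (Theorem~\ref{theorem2.1}), one has $2E = 3V_f + 4V_\infty$, and combined with $\chi(\mathcal P)=2$ this gives, as in~(\ref{eqn3}),
\[
F = V_\infty + \tfrac12 V_f + 2, \qquad 2E - 2F = 2V_\infty + 2V_f - 4 .
\]
These reduce all face and edge counts to $V_\infty$ and $V_f$, and the quantity $2E-2F$ is exactly the number of triangles in any fan-triangulation of $\partial\mathcal P$, which is the combinatorial skeleton of the volume estimates.

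For the upper bound I would first establish a clean but non-sharp version by coning. Fix an interior point $O$, fan-triangulate each face into $\deg F - 2$ hyperbolic triangles, and cone each of these to $O$; this decomposes $\mathcal P$ into $\sum_F(\deg F-2) = 2E-2F = 2V_\infty + 2V_f - 4$ tetrahedra. Since the regular ideal tetrahedron maximizes volume among all hyperbolic tetrahedra, each piece has volume strictly less than $v_{tet}$, whence $\operatorname{vol}(\mathcal P) < v_{tet}(2V_\infty + 2V_f - 4)$. To upgrade the coefficients to the claimed $\tfrac{v_{oct}}{2}$ and $\tfrac{5v_{tet}}{8}$ I would refine the decomposition so that it respects the right angles: subdivide into birectangular tetrahedra (orthoschemes) and estimate each by the Kellerhals formula~(\ref{eqnKel}). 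Near a $4$-valent ideal vertex the surrounding orthoschemes assemble into at most an ideal square pyramid (half a regular ideal octahedron, volume $\tfrac{v_{oct}}{2}$), and near a $3$-valent finite vertex the extremal contribution is governed by $\Delta_{4,4,4}=R(\tfrac{\pi}{4},\tfrac{\pi}{4},\tfrac{\pi}{4})$ and $\Delta_{3,4,4}=R(\tfrac{\pi}{3},\tfrac{\pi}{4},\tfrac{\pi}{4})$, producing the coefficient $\tfrac{5v_{tet}}{8}$; the additive $-\tfrac{v_{oct}}{2}$ is the analogue of the $-4$ in $2E-2F$ and matches the ideal case of Theorem~\ref{th5}.

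For the lower bound the coning decomposition is useless, since a cone tetrahedron can have arbitrarily small volume when $O$ is chosen near $\partial\mathcal P$. Instead I would exhibit pairwise disjoint embedded pieces of controlled volume, one family attached to the ideal vertices and one to the finite vertices. Around each ideal vertex the $4$-valent right-angled local model is a rectangular cusp, from which one extracts an embedded subregion of volume at least $\tfrac{v_{oct}}{8}$; around each finite vertex the $3$-valent orthant model yields an embedded orthoscheme of volume at least $\tfrac{v_{oct}}{32}=\tfrac32\operatorname{vol}(\Delta_{3,4,4})$. Summing over all vertices and subtracting the global correction $\tfrac{v_{oct}}{4}$ gives $\operatorname{vol}(\mathcal P) \geq \tfrac{v_{oct}}{8}V_\infty + \tfrac{v_{oct}}{32}V_f - \tfrac{v_{oct}}{4}$; specializing to $V_\infty=0$ reproduces the compact bound of Theorem~\ref{th2.4}, which is a useful consistency check.

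The main obstacle is the sharp local analysis underlying both directions: proving that the orthoschemes produced by the canonical subdivision are dominated (for the upper bound) and dominate (for the lower bound) the extremal right-angled orthoschemes, which amounts to a monotonicity study of the Kellerhals volume~(\ref{eqnKel}) in its angular parameters. This is genuinely delicate because of the cusped geometry at ideal vertices, where the relevant pieces are non-compact and the extremal configuration is the regular ideal octahedron rather than a simplex; keeping the disjointness (lower bound) and covering (upper bound) properties under control while tracking the exact additive constants is where the real work lies.
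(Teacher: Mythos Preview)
The paper does not prove this theorem; it is quoted from Atkinson~\cite{Atk} and invoked only as a black box (through inequality~(\ref{eqn8})) in Lemmas~\ref{lemma3.1} and~\ref{lemma3.2}. There is no in-paper argument to compare against.

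Evaluated on its own, your sketch has genuine gaps on both sides. For the upper bound, the coning step yields only $\operatorname{vol}(\mathcal P)<2v_{tet}(V_\infty+V_f-2)$, far weaker than the target, and the ``refinement'' paragraph is not an argument: you assert that orthoschemes around an ideal vertex assemble into at most half an octahedron and that the finite-vertex contribution is ``governed by'' $\Delta_{4,4,4}$ and $\Delta_{3,4,4}$, but you never specify the decomposition or explain how $5v_{tet}/8$ is produced by it. That coefficient is not a local orthoscheme maximum; it is the asymptotic volume-per-vertex of compact right-angled polyhedra (cf.\ the L{\"o}bell family and the sharpness clause in Theorem~\ref{th2.4}), and Atkinson reaches it by an inductive reduction on the combinatorics, not by bounding the pieces of a fixed triangulation.

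The lower-bound sketch is weaker still. You claim one can embed pairwise disjoint pieces of volume $v_{oct}/8$ at each ideal vertex and $v_{oct}/32$ at each finite vertex and then subtract a ``global correction'' $v_{oct}/4$, but none of these three numbers is derived, disjointness is never argued, and no mechanism is offered that would produce exactly that additive constant. Observing that the formula specializes correctly at $V_\infty=0$ is a sanity check on the \emph{statement}, not evidence for your \emph{argument}. The monotonicity of~(\ref{eqnKel}) that you flag at the end is not the real obstacle: the difficulty is global packing and covering, and your outline does not address it.
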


In~\cite{ABEV}, the upper bound in (\ref{eqn7}) was improved for right-angled hyperbolic polyhedra with $V_{\infty} \geq 1$ and $V_{\infty} + V_{F} \geq  18$. Noting that $v_{oct} = 4G$, we rewrite the lower bound from~(\ref{eqn7}) as:   
\begin{equation}
\operatorname{vol} (\mathcal P) \geq \frac{G}{8} \left( 4 V_{\infty} + V_f - 8\right). \label{eqn8}  
\end{equation}

\section{Proof of the main theorem} \label{sec3}

We now proceed to the proof of Theorem~\ref{theorem1.1}. Let $\mathcal P$ be a right-angled polyhedron of finite volume in $\mathbb H^3$. Denote by $V_{\infty} \geq 0$ its number of ideal vertices and by $V_f \geq 0$ its number of finite vertices. We determine for which $V_{\infty}$ and $V_f$ the inequality $\operatorname{vol} (P) \leq G$ can hold.  

\begin{lemma} \label{lemma3.1}
Suppose one of the following holds for $\mathcal P$: (1) $V_f = 0$; (2) $V_{\infty} = 0$; (3) $V_{\infty}=1$. Then $\operatorname{vol} (\mathcal P) > G$.
\end{lemma}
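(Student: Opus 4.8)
The plan is to dispose of each of the three cases by combining the volume bounds from Section~\ref{sec2} with the combinatorial constraint~(\ref{eqn4}), and then handling the finitely many small configurations by hand.

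\textbf{Case (1): $V_f = 0$ (ideal polyhedra).} Here $\mathcal P$ is ideal right-angled, so by~(\ref{eqn4}) we have $V_\infty \geq 4$. If $V_\infty \geq 5$, the lower bound in Theorem~\ref{th5} gives $\operatorname{vol}(\mathcal P) \geq \frac{v_{oct}}{4}(V_\infty - 2) \geq \frac{3}{4} v_{oct} = 3 G > G$. If $V_\infty = 4$, I would argue that no ideal right-angled polyhedron with exactly $4$ vertices exists: by~(\ref{eqn3}) such a polyhedron would have $F = 6$ faces, $E = 8$ edges and $4$ four-valent vertices, but one checks (e.g.\ against condition (4) of Theorem~\ref{theorem2.1}, the absence of prismatic $k$-circuits for $k \le 4$) that the only candidate combinatorial type fails Andreev's conditions, so the minimal ideal right-angled polyhedron is in fact the octahedron $\mathcal A_3$ with $V_\infty = 6$ and $\operatorname{vol}(\mathcal A_3) = v_{oct} = 4G > G$. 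Either way $\operatorname{vol}(\mathcal P) > G$.

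\textbf{Case (2): $V_\infty = 0$ (compact polyhedra).} Now $\mathcal P$ is a compact right-angled (Pogorelov) polyhedron, so by Theorem~\ref{theorem2.1} it has $V_f \geq 20$ vertices. Applying the lower bound of Theorem~\ref{th2.4}, $\operatorname{vol}(\mathcal P) \geq \frac{v_{oct}}{32}(V_f - 8) \geq \frac{v_{oct}}{32}\cdot 12 = \frac{3}{8} v_{oct} = \frac{3}{2} G > G$. (Alternatively one cites Theorem~\ref{th3}: the minimal such volume is $\operatorname{vol}(\mathcal L_5) = 4.306207 > G$.) This case is immediate.

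\textbf{Case (3): $V_\infty = 1$.} With exactly one ideal vertex,~(\ref{eqn4}) forces $\frac{1}{2} V_f \geq 3$, i.e.\ $V_f \geq 6$, and recall $V_f$ is even. If $V_f \geq 8$, the rewritten Atkinson lower bound~(\ref{eqn8}) gives $\operatorname{vol}(\mathcal P) \geq \frac{G}{8}(4 + V_f - 8) = \frac{G}{8}(V_f - 4) \geq \frac{G}{8}\cdot 4 = \frac{G}{2}$, which is too weak, so the bound~(\ref{eqn8}) alone does not finish the case and one must push further. For $V_f = 6$ (so by~(\ref{eqn3}) $F = 6$: a polyhedron with six faces, one four-valent and six three-valent vertices) and $V_f = 8$ ($F = 7$), I would enumerate the finitely many combinatorial types satisfying conditions (1)--(4) of Theorem~\ref{theorem2.1}; in each admissible type the single ideal vertex lies on exactly four faces while Lemma~\ref{rem} constrains which faces can be triangular or quadrilateral, and one shows each resulting polyhedron either violates Andreev's conditions or, when it is realizable, can be cut along a totally geodesic plane through the ideal vertex into pieces whose volumes are computed via the orthoscheme formula~(\ref{eqnKel}) and shown to exceed $G$. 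The main obstacle is precisely this last point: the Atkinson lower bound is not sharp enough near the minimum, so Case~(3) cannot be closed by a clean inequality and instead requires a careful finite combinatorial enumeration together with explicit volume estimates (via decomposition into birectangular tetrahedra) for the handful of genuinely realizable small polyhedra with one ideal vertex. I expect the bookkeeping of prismatic $3$- and $4$-circuits in these small cases, and verifying realizability, to be the delicate part.
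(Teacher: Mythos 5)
Your cases (1) and (2) are fine and essentially match the paper: for $V_f=0$ the paper simply notes that Theorem~\ref{th5} (with $V_\infty\geq 6$ forced by Theorem~\ref{theorem2.1}) already gives $\operatorname{vol}(\mathcal P)\geq v_{oct}=4G$, and for $V_\infty=0$ it cites Theorem~\ref{th3}. One small inaccuracy in your case (1): there is no ``candidate combinatorial type'' with four 4-valent vertices to test against Andreev's conditions at all, since $V_\infty=4$ would force $E=8$, $F=6$ and then $2E=16<3F=18$ face--vertex incidences, which is combinatorially impossible; the correct statement is simply that an ideal right-angled polyhedron has $V_\infty\geq 6$.

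The genuine gap is in case (3). As you yourself compute, the bound~(\ref{eqn8}) with $V_\infty=1$ gives only $\operatorname{vol}(\mathcal P)\geq \frac{G}{8}(V_f-4)$, which exceeds $G$ only when $V_f\geq 14$. Your plan then proposes an (unexecuted) enumeration of combinatorial types for $V_f=6$ and $V_f=8$, but says nothing about $V_f=10$ and $V_f=12$, where the bound still gives at most $G$ (and for $V_f=12$ only the non-strict value $G$); these cases have $F=8$ and $F=9$ faces and a nontrivial number of admissible combinatorial types, so the promised case-by-case analysis is both incomplete in scope and not actually carried out. The paper closes this case with one citation that replaces all of this work: by Nonaka~\cite[Lemma~3.1]{Non}, a right-angled hyperbolic polyhedron with exactly one ideal vertex has $F\geq 12$ faces, so~(\ref{eqn3}) forces $V_f\geq 18$, and then~(\ref{eqn8}) yields $\operatorname{vol}(\mathcal P)\geq \frac{G}{8}(4+18-8)=\frac{14G}{8}>G$. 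To repair your argument you need either to invoke such a lower bound on the number of faces of a one-cusped right-angled polyhedron (or prove one), or to genuinely carry out the combinatorial elimination for all of $V_f\in\{6,8,10,12\}$ together with strict volume estimates; as written, the case $V_\infty=1$ is not proved.
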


\begin{proof}
(1) In this case, $\mathcal P$ is an ideal right-angled polyhedron, and by Theorem~\ref{th5},  
$\operatorname{vol} (\mathcal P) \geq v_{oct} = 4 G > G$. 

(2) Here, $\mathcal P$ is a compact right-angled polyhedron, and by Theorem~\ref{th3}, its volume is bounded below by the volume of the right-angled dodecahedron, so $\operatorname{vol} (\mathcal P) \geq 4.306207 > G$.

(3) As shown by Nonaka~\cite[Lemma.~3.1]{Non}, in this case $F \geq 12$, and from equality (\ref{eqn3}), it follows that $V_f \geq 18$. Then, by formula (\ref{eqn8}),  
$\operatorname{vol}(\mathcal P) \geq \frac{G}{8} (4 \cdot 1 + 18 - 8) = \frac{14G}{8} > G$. 
\end{proof}

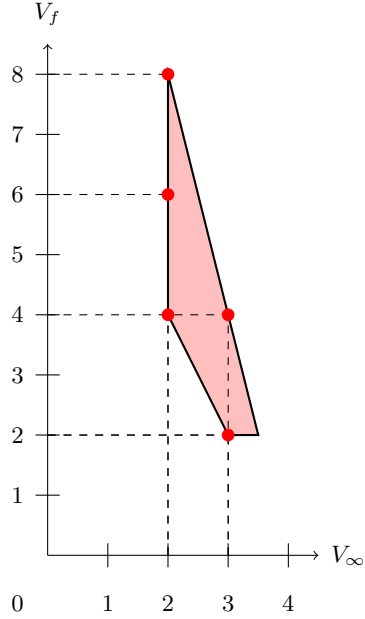
\begin{figure}[ht]	
	\begin{center} 		
\begin{tikzpicture}[scale=0.8] 				
\coordinate (O) at (0,0);
\coordinate (X) at (4.5,0);
\coordinate (Y) at (0,8.5);
%\coordinate (D) at (1.5, 0.75);
%\coordinate (E) at (1.5, 1.5);
%	
\draw[black, ->] (O) -- (X); % -- (C) -- cycle;
\draw[black, ->] (O) -- (Y); %-- (B) -- (E) -- cycle;
\draw[black] (1,-0.2) -- (1,0.2);
\draw[black] (2,-0.2) -- (2,0.2);
\draw[black] (3,-0.2) -- (3,0.2);
\draw[black] (4,-0.2) -- (4,0.2);
\draw[black] (-0.2,1) -- (0.2,1);
\draw[black] (-0.2,2) -- (0.2,2);
\draw[black] (-0.2,3) -- (0.2,3);
\draw[black] (-0.2,4) -- (0.2,4);
\draw[black] (-0.2,5) -- (0.2,5);
\draw[black] (-0.2,6) -- (0.2,6);
\draw[black] (-0.2,7) -- (0.2,7);
\draw[black] (-0.2,8) -- (0.2,8);
\draw[black, dashed] (2,0) -- (2,4);
\draw[black, dashed] (3,0) -- (3,2);
\draw[black, dashed] (0,2) -- (3,2);
\node[] at (-0.5,-0.8) {\small $0$};
\node[] at (1,-0.8) {\small $1$};
\node[] at (2,-0.8) {\small $2$};
\node[] at (3,-0.8) {\small $3$};
\node[] at (4,-0.8) {\small $4$};
\node[] at (-0.5,1) {\small $1$};
\node[] at (-0.5,2) {\small $2$};
\node[] at (-0.5,3) {\small $3$};
\node[] at (-0.5,4) {\small $4$};
\node[] at (-0.5,5) {\small $5$};
\node[] at (-0.5,6) {\small $6$};
\node[] at (-0.5,7) {\small $7$};
\node[] at (-0.5,8) {\small $8$};
\node[] at (5,0) {\small $V_{\infty}$};
\node[] at (0,9) {\small $V_{f}$};
\fill[pink] (2,4) -- (3,2) -- (3.5,2) -- (2,8) -- cycle; 
\draw[black, thick] (2,4) -- (3,2) -- (3.5,2) -- (2,8) -- cycle;
\draw[black, dashed] (2,0) -- (2,8);
\draw[black, dashed] (3,0) -- (3,4);
\draw[black, dashed] (0,2) -- (3,2);
\draw[black, dashed] (0,4) -- (3,4);
\draw[black, dashed] (0,6) -- (2,6);
\draw[black, dashed] (0,8) -- (2,8);
\fill[red] (3,2) circle(3pt);
\fill[red] (3,4) circle(3pt);
\fill[red] (2,4) circle(3pt);
\fill[red] (2,6) circle(3pt);
\fill[red] (2,8) circle(3pt);
\end{tikzpicture}
\end{center} \caption{The closed region $\Omega$.} \label{fig:region} 	
\end{figure}

\begin{lemma} \label{lemma3.2}
Let $\Omega$ be the closed region bounded by the quadrilateral with vertices $(2,4)$, $(3,2)$, $(3.5,2)$, and $(2,8)$, as shown in Fig.~\ref{fig:region}. If $\mathcal P$ is such that $(V_{\infty}, V_f) \not\in \Omega$, then $\operatorname{vol} (\mathcal P) > G$.   
\end{lemma}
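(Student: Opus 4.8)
The plan is to show that $\Omega$ is cut out by exactly those linear inequalities on $(V_\infty,V_f)$ that either hold automatically for every right-angled polyhedron or are supplied by Lemma~\ref{lemma3.1}, with the single exception of Atkinson's volume bound written in the form (\ref{eqn8}); once this is observed, a pair $(V_\infty,V_f)$ lying outside $\Omega$ is forced to violate the threshold of (\ref{eqn8}), which yields $\operatorname{vol}(\mathcal P)>G$ at once.

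Concretely, I would proceed as follows. \emph{Step 1}: verify that the quadrilateral $\Omega$ equals an intersection of four half-planes. Inspecting the vertices $(2,4),(3,2),(3.5,2),(2,8)$ shows that the edges of $\Omega$ lie on the lines $V_\infty=2$, $V_f=2$, $2V_\infty+V_f=8$ and $4V_\infty+V_f=16$, so that
$$\Omega=\bigl\{(V_\infty,V_f)\ :\ V_\infty\ge 2,\ V_f\ge 2,\ 2V_\infty+V_f\ge 8,\ 4V_\infty+V_f\le 16\bigr\}.$$
\emph{Step 2}: let $\mathcal P$ be a right-angled polyhedron with $(V_\infty,V_f)\notin\Omega$; if $V_\infty\le 1$ or $V_f=0$ then $\operatorname{vol}(\mathcal P)>G$ by Lemma~\ref{lemma3.1}, so we may assume $V_\infty\ge 2$ and, since $V_f$ is even by (\ref{eqn3}), also $V_f\ge 2$. \emph{Step 3}: by (\ref{eqn4}) one has $V_\infty+\frac{1}{2} V_f\ge 4$, i.e. $2V_\infty+V_f\ge 8$; hence the point $(V_\infty,V_f)$ satisfies the first three defining inequalities of $\Omega$, and as it lies outside $\Omega$ the fourth must fail, $4V_\infty+V_f>16$, which by integrality forces $4V_\infty+V_f\ge 17$. \emph{Step 4}: apply the lower bound (\ref{eqn8}), legitimate since $V_\infty\ge 1$, to obtain
$$\operatorname{vol}(\mathcal P)\ \ge\ \frac{G}{8}\bigl(4V_\infty+V_f-8\bigr)\ \ge\ \frac{9G}{8}\ >\ G.$$

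I do not anticipate a real obstacle; the argument is essentially bookkeeping, and the only delicate points are the following. First, the elementary check in Step~1 that the given quadrilateral really is the stated system of inequalities must be carried out carefully. Second, one should note that the edge $4V_\infty+V_f=16$ of $\Omega$ is precisely the locus on which the bound (\ref{eqn8}) degenerates to the value $G$, which explains why that edge has to be kept inside the \emph{closed} region $\Omega$. Third, one must make sure that Lemma~\ref{lemma3.1} together with the parity of $V_f$ covers exactly the cases excluded by the half-plane description (namely $V_\infty\le 1$ and $V_f=0$), so that the reduction in Step~2 is exhaustive. Finally, if some pair $(V_\infty,V_f)\notin\Omega$ is not realized by any right-angled polyhedron, the claimed inequality holds vacuously and needs no separate treatment.
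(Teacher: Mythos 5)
Your proposal is correct and follows essentially the same route as the paper: combine Lemma~\ref{lemma3.1} with the parity of $V_f$ to reduce to $V_\infty\geq 2$, $V_f\geq 2$, invoke inequality~(\ref{eqn4}), and use Atkinson's bound in the form~(\ref{eqn8}) to handle $4V_\infty+V_f>16$, with $\Omega$ identified as the intersection of these four half-planes. The only cosmetic difference is that you argue directly (using integrality to get $4V_\infty+V_f\geq 17$ and hence $\operatorname{vol}(\mathcal P)\geq \tfrac{9G}{8}>G$), whereas the paper states the contrapositive, showing that $\operatorname{vol}(\mathcal P)\leq G$ forces $(V_\infty,V_f)\in\Omega$.
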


\begin{proof}
By Lemma~\ref{lemma3.1} and the parity of $V_f$, we may assume that if $\operatorname{vol} (\mathcal P) \leq G$, then $\mathcal P$ has $V_{\infty} \geq 2$ ideal and $V_{f} \geq 2$ finite vertices. By Theorem~\ref{theorem2.1}, the quantities $V_{\infty}$ and $V_f$ satisfy inequality~(\ref{eqn4}). By inequality~(\ref{eqn8}), for $\operatorname{vol} (\mathcal P) \leq G$ to hold, $V_{\infty}$ and $V_f$ must satisfy $4 V_{\infty} + V_f \leq 16$. The system of inequalities 
$$
\begin{cases}
V_{\infty} \geq 2, \cr 
V_{f} \geq 2, \cr
V_{\infty} + \frac{1}{2} V_{f} \geq 4, \cr 
4 V_{\infty} + V_{f} \leq 16
\end{cases}
$$
define the closed region $\Omega$ shown in Fig.~\ref{fig:region}.  
\end{proof}

By Lemmas~\ref{lemma3.1} and~\ref{lemma3.2}, the inequality $\operatorname{vol} (\mathcal P) \leq G$ can hold only if $(V_{\infty}, V_f)$is equal to $(2,4)$, $(2,6)$, $(2,8)$, $(3,2)$, or $(3,4)$. We consider each of these cases below. 

For a polyhedron $\mathcal P$ we define the quantity $W(\mathcal P)$ as the total number of vertices across all its faces. Since each ideal vertex in $\mathcal P$ has degree $4$ and each finite vertex has degree $3$, we get 
\begin{equation} 
W(\mathcal P) = W (V_{\infty}, V_f) = 4 V_{\infty}  + 3 V_f. \label{eqn:W}
\end{equation}  

\begin{lemma} \label{lemma3.3}
If the number of ideal vertices in $\mathcal P$ is $V_{\infty}=2$, then $\operatorname{vol} (\mathcal P) > G$.
\end{lemma}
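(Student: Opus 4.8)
The plan is to carry the lattice‑point reduction of Lemmas~\ref{lemma3.1} and~\ref{lemma3.2} one notch further. By those lemmas and the parity of $V_f$, if $V_\infty=2$ and $\operatorname{vol}(\mathcal P)\le G$, then $(V_\infty,V_f)$ is one of $(2,4)$, $(2,6)$, $(2,8)$, and I want to show that each of these is combinatorially impossible except for a single type occurring when $(2,8)$, for which the volume is still $>G$. The engine is a double count of incidences between faces and ideal vertices: an ideal vertex of a right‑angled polyhedron has degree $4$ (its link is a Euclidean rectangle), so summing over all faces $f$ the number of ideal vertices on $f$ gives $4V_\infty=8$. By Lemma~\ref{rem} a triangular face has at least two, and a quadrilateral face at least one, ideal vertex; since $V_\infty=2$, a triangular face must carry exactly the two ideal vertices of $\mathcal P$ and hence contains the edge joining them, so $\mathcal P$ has at most two triangular faces. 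I then combine $\sum_k p_k=F$ with $F$ as in~(\ref{eqn3}) and $\sum_k k\,p_k=W=4V_\infty+3V_f$ from~(\ref{eqn:W}), where $p_k$ is the number of $k$‑gonal faces.

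For $(2,4)$, $F=6$ and $W=20$ give $\sum_k(k-3)p_k=2$, whose only nonnegative solutions are $(p_3,p_4)=(4,2)$ and $(p_3,p_5)=(5,1)$, each with $p_3\ge 4>2$; impossible. For $(2,6)$, $F=7$ and $W=26$ force $(p_3,p_4)=(2,5)$, and the double count then gives $8\ge 2\cdot 2+1\cdot 5=9$; impossible. For $(2,8)$, $F=8$ and $W=32$ give $p_3=\sum_{k\ge 5}(k-4)p_k$; with $p_3\le 2$ and after discarding $(p_3,p_4,p_6)=(2,5,1)$ (where the double count again yields $8\ge 9$), the only survivors are $p_4=8$, or $(p_3,p_4,p_5)=(1,6,1)$, or $(p_3,p_4,p_5)=(2,4,2)$.

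The type $(1,6,1)$ dies immediately: the edge joining the two ideal vertices lies in the unique triangle and in one further face, which contains both ideal vertices and so would itself be a triangle. For $(2,4,2)$ I would trace the cyclic order of faces around the two ideal vertices (each meets both triangles and two quadrilaterals) and around the finite apices of the triangles; since every finite vertex has degree three, various ``third faces'' at these vertices are forced to coincide, and following the identifications makes one of the two pentagonal faces close up after only four edges --- a contradiction. In the case $p_4=8$ all faces are quadrilaterals, each with exactly one ideal vertex by the double count, so the four faces at one ideal vertex $u$ and the four at the other, $w$, are disjoint; each quadruple tiles a disk around its apex, the two disks are glued along a common $8$‑cycle carrying all eight finite vertices, and ruling out coincidences among those vertices by degree arguments pins down a unique combinatorial type --- the polyhedron $A_4^{\ast}$ dual to the square antiprism, i.e. two ``pinwheels'' of four quadrilaterals around ideal apices $u,w$ joined along an octagon of finite vertices. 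One checks that $A_4^{\ast}$ satisfies all the conditions of Theorem~\ref{theorem2.1}, hence is realizable, and the lemma comes down to showing $\operatorname{vol}(A_4^{\ast})>G$.

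For that last inequality I would use the dihedral symmetry of $A_4^{\ast}$ (rotations about the axis $uw$ together with the mirror planes through it) to subdivide it into congruent orthoschemes, read off their dihedral angles from the right‑angled structure, and evaluate each piece with formula~(\ref{eqnKel}); summing then gives $\operatorname{vol}(A_4^{\ast})$ in closed form, and one checks it exceeds $G$. The main obstacle is precisely this step --- building the orthoscheme decomposition and certifying the strict inequality $\operatorname{vol}(A_4^{\ast})>G$ --- together with the somewhat delicate elimination of the $(2,4,2)$ face vector; the remaining cases follow routinely from the counting above.
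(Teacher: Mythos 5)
Your combinatorial reduction follows the same overall strategy as the paper (restrict to $(V_\infty,V_f)\in\{(2,4),(2,6),(2,8)\}$, count faces via $F$, $W=4V_\infty+3V_f$ and the bound $p_3\le 2$, then isolate the all-quadrilateral polyhedron and compute its volume), and in places your bookkeeping is cleaner: the incidence count $WI(\mathcal P)=4V_\infty=8$ combined with Lemma~\ref{rem} disposes of $(2,4)$, $(2,6)$ and the face vector $(p_3,p_4,p_6)=(2,5,1)$ in one line each, where the paper argues by subcases on the position of the two ideal vertices. One local slip: for $(p_3,p_4,p_5)=(1,6,1)$ your inference ``the face across the ideal--ideal edge contains both ideal vertices and so would itself be a triangle'' is a non sequitur (a quadrilateral or pentagon can contain two adjacent ideal vertices); the correct one-line fix is that $WI=8$ forces each quadrilateral to contain exactly one ideal vertex and the pentagon none, so no second face can contain both ideal vertices.

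The genuine gaps are exactly the two steps you defer. First, the elimination of $(p_3,p_4,p_5)=(2,4,2)$ is only sketched (``third faces are forced to coincide \dots the pentagon closes up after four edges''); this is the substantive part of the paper's Subcase~3.3, where one first shows the four faces adjacent to the two triangles are quadrilaterals and then runs a case analysis on whether the outer vertices $u_1,\dots,u_4$ are distinct or coincide, reaching contradictions with the degree-$3$ condition at finite vertices or with connectedness of the $1$-skeleton; without carrying out some such analysis the case is not closed. Second, the strict inequality for the all-quadrilateral polyhedron is not established: the lower bound (\ref{eqn8}) gives only $\operatorname{vol}\ge G$ when $(V_\infty,V_f)=(2,8)$, so an actual volume computation is required, and you leave the orthoscheme decomposition as your ``main obstacle.'' The paper completes it by quotienting $P_{(2,8)}$ by its dihedral symmetry group of order eight to obtain $\Delta_{4,4,4}=R(\pi/4,\pi/4,\pi/4)$, whose volume $\tfrac12\Lambda(\pi/4)$ follows from (\ref{eqnKel}), giving $\operatorname{vol}(P_{(2,8)})=4\Lambda(\pi/4)=2G>G$. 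Until these two steps are supplied, the proof of the lemma is incomplete, although the route you outline is viable and essentially the paper's.
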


\begin{proof}
By the above arguments, it remains to consider the three cases \linebreak $(V_{\infty}, V_f) \in \{ (2,4), (2,6), (2,8) \}$. 

\smallskip 
\noindent	
\textbf{Case 1:} $(V_{\infty}, V_f) = (2,4)$. From formulas~(\ref{eqn3}) and~(\ref{eqn:W}), we have $F=6$ and $W(\mathcal P) =  20$. 
Let $p_n$, $n \geq 3$, denote the number of $n$-gonal faces in $\mathcal P$. Then $\sum_{n \geq 3} p_n = F= 6$ and $\sum_{n \geq 3} np_n = W(P) =  20$. Note that the number of triangular faces satisfies the inequality $p_3 \leq 2$. Indeed, by Lemma~\ref{rem}, each triangular face must contain two ideal vertices that belong to a common edge. Since $V_{\infty} = 2$, all triangular faces must contain the same edge. Hence, there are at most two such faces, and each of the remaining four faces has at least four vertices. We obtain the estimate 
$W(\mathcal P) \geq 3 \cdot 2 + 4 \cdot 4 = 22$, which contradicts the equality $W(\mathcal P) = 20$. Thus, \textbf{Case 1} is not realized. 

\smallskip
\noindent		
\textbf{Case 2:} $(V_{\infty}, V_f) = (2, 6)$. From formulas~(\ref{eqn3}) and~(\ref{eqn:W}), we obtain $F=7$ and   	
\begin{equation}
	W(\mathcal P) = 4 V_{\infty} + 3 V_f = 26. \label{eqn26}
\end{equation}
Consider all possible positions for two ideal vertices $v_1$ and $v_2$.		

\smallskip
\noindent 
\textbf{Subcase 2.1:} Suppose that $v_1$ and $v_2$ do not lie in the same triangular face. Then, by Lemma~\ref{rem}, $\mathcal P$ has no triangular faces. Hence, each face contains at least $4$ vertices. Therefore, $W(\mathcal P) \geq 4 F = 28$, which contradicts equality (\ref{eqn26}).

\smallskip
\noindent 
\textbf{Subcase 2.2:} Suppose that $v_1$ and $v_2$ lie in the same triangular face (and so are connected by an edge). Then, as in \textbf{Case 1}, $p_3 \leq 2$. By Lemma~\ref{rem}, each quadrilateral face contains at least one ideal vertex. Keeping in mind that $v_1$ and $v_2$ are connected by an edge, we conclude that  the number of faces containing at least one ideal vertex (and so can be triangular or quadrilateral) does not exceed $6$. Consequently, there is at least one face that contains no ideal vertices and has at least $5$ vertices. Hence $W(\mathcal P) \geq 3 p_3  + 4 (6-p_3)  + 5 \cdot 1 = 29 - p_3 \geq 27$, 
which contradicts (\ref{eqn26}). Thus, \textbf{Case 2} is not realized. 

\smallskip 	
\noindent	
\textbf{Case 3:} $(V_{\infty}, V_f) =(2,8)$. From formulas~(\ref{eqn3}) and~(\ref{eqn:W}), we obtain $F=8$ and   
\begin{equation}
	W(\mathcal P) = 4 V_{\infty} + 3 V_f = 32. \label{eqn100}
\end{equation}
Consider all possible arrangements of the two ideal vertices $v_1$ and $v_2$.

\smallskip 
\noindent
\textbf{Subcase 3.1:} Suppose both ideal vertices $v_1$ and $v_2$ lie in a $k$-gonal face $f$, $k \geq 4$, but are not connected by an edge. By Lemma~\ref{rem}, each quadrilateral face must contain at least one ideal vertex. Therefore, apart from face $f$, vertex $v_1$ can be contained in at most three quadrilateral faces. The same holds for vertex $v_2$. Thus, $\mathcal P$ has a $k$-gonal face $f$ and at most six other quadrilateral faces. Consequently, the eighth face of $\mathcal P$ has only finite vertices and number of vertices in this face is at least $5$. Therefore, $W(\mathcal P) \geq k + 4 \cdot 6 + 5 \cdot 1\geq 33$, given $k \geq 4$, which contradicts (\ref{eqn100}). 	

\smallskip
\noindent		
\textbf{Subcase 3.2:} Suppose the ideal vertices $v_1$ and $v_2$ lie in a $k$-gonal face $f$, $k \geq 4$, and are connected by an edge $e$. Then the face $f_1$ adjacent to $f$ along edge $e$ also contains the ideal vertices $v_1$ and $v_2$. By Lemma~\ref{rem}, each quadrilateral face must contain at least one ideal vertex. Therefore, apart from faces $f$ and $f_1$, vertex $v_1$ can be contained in at most two quadrilateral faces. The same holds for vertex $v_2$. Thus, the number of faces in $\mathcal P$ containing at least one ideal vertex does not exceed $6$ (with $f_1$ possibly being triangular). Hence, there are at least two faces whose vertices are all finite, and these faces contain at least five vertices each. Thus,  
$W(\mathcal P) \geq k + 3 + 4 \cdot 4 + 5 \cdot 2 \geq 33$, given $k \geq 4$, which contradicts (\ref{eqn100}).

\smallskip 
\noindent
\textbf{Subcase 3.3:} Suppose the ideal vertices $v_1$ and $v_2$ are connected by an edge $e$, and lie in triangular faces $T_1$ and $T_2$ both. Let $Q_1$, $Q_2$, $Q_3$, $Q_4$ denote the faces adjacent to $T_1$ or $T_2$. Note that the $Q_i$ are quadrilaterals, as shown in Fig.~\ref{figpk}. Indeed, since each $Q_i$ contains at most one ideal vertex, it must have at least four vertices. Suppose at least one $Q_i$ is a $k$-gon, $k \geq 5$. Since the ideal vertices $v_1$ and $v_2$ are connected by edge $e$, the number of faces in $\mathcal P$ containing at least one ideal vertex does not exceed $6$. Consequently, $\mathcal P$ has at least two faces whose vertices are all finite, and each such face contains at least five vertices. Thus, $W(\mathcal P) \geq  3 \cdot  2 +  4 \cdot  3 + k +  5 \cdot 2 \geq 33$, given $k \geq 5$, which contradicts (\ref{eqn100}). Therefore, all $Q_i$ are quadrilaterals.

For $i=1,2,3,4$, let $u_i$ denote the vertex shared by the common edge of $Q_i$ and $Q_{i+1}$ (with indices modulo $4$) that does not lie in $T_1$ or $T_2$. Let us denote by $w_1$ and $w_2$ finite vertices of triangles $T_1$ and $T_2$ respectively, see Fig.~\ref{figpk}
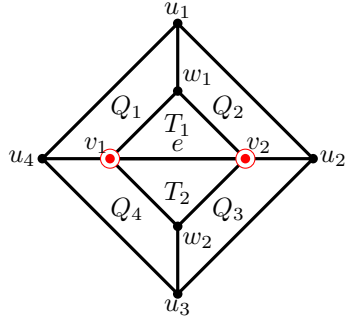
\begin{figure}[ht]	
	\begin{center} 	
		\begin{tikzpicture}[scale=0.9]
			\coordinate (A) at (-1,0);
			\coordinate (B) at (0,1);
			\coordinate (C) at (1, 0); 
			\coordinate (D) at (0, -1);
			\coordinate (E) at (-2, 0);
			\coordinate (F) at (0, 2);
			\coordinate (G) at (2, 0);
			\coordinate (H) at (0, -2);
			\draw[very thick, black] (A) -- (B) -- (C) -- cycle; 
			\draw[very thick, black] (A) -- (D) -- (C) -- cycle; 
			\draw[very thick, black] (E) -- (A);
			\draw[very thick, black] (F) -- (B);
			\draw[very thick, black] (C) -- (G);
			\draw[very thick, black] (D) -- (H);
			\draw[very thick, black] (E) -- (F);
			\draw[very thick, black] (G) -- (F);
			\draw[very thick, black] (H) -- (G);
			\draw[very thick, black] (E) -- (H);
			\foreach \v in {A,C} {
				\fill[white] (\v) circle (4pt); 
				\draw[red] (\v) circle (4pt); 
				\fill[red] (\v) circle (2pt);}			
			\foreach \v in {B,D,F,E,G,H} {
				\fill[black] (\v) circle (2pt); }
				\node[] at (-1.2,0.2) {$v_1$};
				\node[] at (1.2,0.2) {$v_2$};
				\node[] at (0.3,1.2) {$w_1$};
				\node[] at (0.3,-1.2) {$w_2$};
				\node[] at (0.,0.2) {$e$};
		\node[] at (-2.3,0) {$u_4$};
		\node[] at (0,2.2) {$u_1$};
		\node[] at (2.3, 0) {$u_2$};
		\node[] at (0,-2.2) {$u_3$};
		\node[] at (-0.75,0.75) {$Q_1$};
		\node[] at (0.75,0.75) {$Q_2$};
		\node[] at (0.75,-0.75) {$Q_3$};
		\node[] at (-0.75,-0.75) {$Q_4$};
		\node[] at (0,0.5) {$T_1$};
		\node[] at (0,-0.5) {$T_2$};		
		\end{tikzpicture}
	\end{center} \caption{Two adjacent triangles surrounded by quadrilaterals.} \label{figpk} 	
\end{figure}

Consider the following cases. 
\begin{itemize}
	\item[(i)] Suppose the vertices $u_1, u_2, u_3, u_4$ are pairwise distinct. Then all vertices lying in the faces $T_1$, $T_2$, $Q_1$, $Q_2$, $Q_3$, and $Q_4$ have the maximum possible degree, i.e., finite vertices have degree $3$, and ideal vertices have degree $4$. Hence eight vertices, shown in Fig.~\ref{figpk}, are not connected to the remaining two vertices of $\mathcal P$ by edges, that contradicts the connectedness of the 1-skeleton of the polyhedron.  
	\item[(ii)] Suppose two consecutive vertices $u_i$ and $u_{i+1}$ coincide. Then $Q_{i+1}$ collapses into a triangle, contradicting its quadrilateral nature.  
	\item[(iii)] Suppose two non-consecutive vertices $u_i$ and $u_{i+2}$ coincide, while $u_{i+1}$ and $u_{i+3}$ are distinct. If $i \in \{1,3\}$ then vertex $u_1=u_3$ is adjacent to four vertices $w_1$, $w_2$, $u_2$, and $u_4$, contradicting its degree of $3$. Analogously, if $i \in \{2,4\}$ then vertex $u_2=u_4$ is adjacent to four vertices  $w_1$, $w_2$, $u_2$, and $u_4$, contradicting its degree of $3$.  
	\item[(iv)] Suppose the vertices $u_i$ and $u_{i+2}$ coincide, and also $u_{i+1}$ and $u_{i+3}$ coincide. Then all vertices lying in the faces $T_1$, $T_2$, $Q_1$, $Q_2$, $Q_3$, and $Q_4$ have the maximum possible degree, i.e., the finite vertices $\{v_1, v_3, u_i=u_{i+2}, u_{i+1}=u_{i+3}\}$ have degree $3$, and the ideal vertices $\{v_2, v_4 \}$ have degree $4$. Hence, these vertices are not connected to the remaining four vertices of $\mathcal P$ by edges, contradicting the connectedness of the 1-skeleton of the polyhedron. 
\end{itemize}

\smallskip
\noindent
\textbf{Subcase 3.4:} Suppose the ideal vertices $v_1$ and $v_2$ do not lie in a common face, and $\mathcal P$ has a $k$-gonal face, where $k \geq 5$. By Lemma~\ref{rem}, $\mathcal P$ cannot have triangular faces. Hence, $W(\mathcal P) \geq 5 + 7 \cdot 4 = 33$, which contradicts (\ref{eqn100}).

\smallskip 
\noindent
\textbf{Subcase 3.5:} Suppose all eight faces of $\mathcal P$ are quadrilaterals, so each face contains exactly one ideal vertex. 
\begin{figure}[ht]	
	\begin{center} 	
		\begin{tikzpicture}[scale=0.9]
			\coordinate (O) at (0,0);
			\coordinate (A) at (-1,-1);
			\coordinate (B) at (-1,1);
			\coordinate (C) at (1, 1); 
			\coordinate (D) at (1, -1);
			\coordinate (A1) at (-2,-2);
			\coordinate (B1) at (-2,2);
			\coordinate (C1) at (2, 2); 
			\coordinate (D1) at (2, -2);
			\coordinate (A2) at (-1,0);
			\coordinate (B2) at (0,1);
			\coordinate (C2) at (1, 0); 
			\coordinate (D2) at (0, -1);
			\draw[very thick, black] (A) -- (A2) -- (B)-- (B2) -- (C)-- (C2) -- (D)-- (D2) -- cycle; 
			\draw[very thick, black] (O) -- (A2);
			\draw[very thick, black] (O) -- (B2);
			\draw[very thick, black] (O) -- (C2);
			\draw[very thick, black] (O) -- (D2);
			\draw[very thick, black] (A) -- (A1);
			\draw[very thick, black] (B) -- (B1);
			\draw[very thick, black] (C) -- (C1);
			\draw[very thick, black] (D) -- (D1);
			\foreach \v in {O} {
				\fill[white] (\v) circle (4pt);
				\draw[red] (\v) circle (4pt); 
				\fill[red] (\v) circle (2pt);}				
			\foreach \v in {A,B,C,D,A2,B2,C2,D2} {
				\fill[black] (\v) circle (2pt);}
			%\node[] at (0.25,0.25) {$v$};
			\node[] at (-0.6,-0.6) {$Q_1$};
			\node[] at (-0.6, 0.6) {$Q_2$};
			\node[] at (0.6,0.6) {$Q_3$};
			\node[] at (0.6,-0.6) {$Q_4$};
			\node[] at (0.25,0.25) {$v_1$};
			\node[] at (-1.25,0) {$q_1$};
			\node[] at (0, 1.25) {$q_2$};
			\node[] at (1.25,0) {$q_3$};
			\node[] at (0,-1.25) {$q_4$};
			\node[] at (-2,0) {$Q'_1$};
			\node[] at (0,2) {$Q'_2$};
			\node[] at (2,0) {$Q'_3$};
			\node[] at (0,-2) {$Q'_4$};
			\node[] at (0,-2.7) {$(a)$};
		\end{tikzpicture}
\qquad \qquad 
\begin{tikzpicture}[scale=0.9] 
\coordinate (A) at (0,0);
\coordinate (B) at (4,0);
\coordinate (C) at (2, 3.5); 
\coordinate (D) at (1.5, 2);
\coordinate (E) at (2, 2.2);
\coordinate (F) at (2.5, 2);
\coordinate (G) at (2, 1);
\coordinate (H) at (1, 1);
\coordinate (I) at (3, 1);
\coordinate (J) at (2, 0);
\draw[very thick, black] (A) -- (B) -- (C) -- cycle; 
\draw[very thick, black] (E) -- (D);
\draw[very thick, black] (E) -- (F);
\draw[very thick, black] (D) -- (C);
\draw[very thick, black] (F) -- (C);
\draw[very thick, black] (E) -- (G);
\draw[very thick, black] (G) -- (H);
\draw[very thick, black] (G) -- (I);
\draw[very thick, black] (G) -- (J);
\draw[very thick, black] (H) -- (A);
\draw[very thick, black] (I) -- (B);
\draw[very thick, black] (H) -- (D);
\draw[very thick, black] (I) -- (F);		
\foreach \v in {C,G} {
\fill[white] (\v) circle (4pt); 
\draw[red] (\v) circle (4pt); 
\fill[red] (\v) circle (2pt);}			
\foreach \v in {B,A,D,E,F,H,I,J} {
\fill[black] (\v) circle (2pt); }
\fill[white] (0,-0.5) circle (2pt);
\node[] at (2.3,1.2) {$v_1$}; 
\node[] at (2.3,3.8) {$v_2$}; 
	\node[] at (2,-0.7) {$(b)$};
\end{tikzpicture}		
\end{center} \caption{The polyhedron $P_{(2,8)}$ and its Schlegel diagram.} \label{fig28} 	
\end{figure}
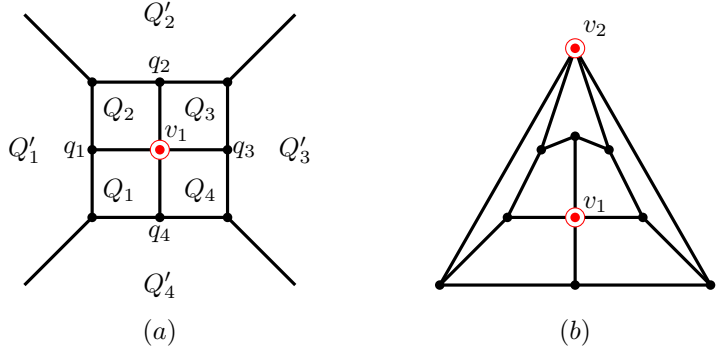 
Denote the ideal vertices in $\mathcal P$ by $v_1$ and $v_2$. 
Let $Q_1$, $Q_2$, $Q_3$, $Q_4$ be the quadrilateral faces containing $v_1$, as shown in Fig.~\ref{fig28}\,(a). Let $q_i$, $i= 1, \dots, 4$, be the finite vertex shared by $Q_i$ and $Q_{i+1}$ (with indices modulo $4$). Let $Q'_i$ be the quadrilateral face sharing vertex $q_i$ with $Q_i$ and $Q_{i+1}$. Since $\mathcal P$ has 16 edges, the four edges where $Q'_i$ and $Q'_{i+1}$ intersect must meet at the ideal vertex $v'$, which is assumed to be far away in picture Fig.~\ref{fig28}\,(a). Thus, this case corresponds to a unique polyhedron. A Schlegel diagram of the same polyhedron is shown in Fig.~\ref{fig28}\,(b).  
Since this polyhedron has $V_{\infty}=2$ and $V_f=8$, we denote it by $P_{(2,8)}$. 
In Fig.~\ref{fig28new} we give a presentation of $P_{(2,8)}$, where the left and right edges are assumed to be identified along $AB_1C_1 D$. The figure shows that $P_{(2,8)}$ has a dihedral symmetry group of order eight, generated by reflections in the planes $(AC_3D)$ and $(AB_3D)$, intersecting along the line $AD$. 
\begin{figure}[ht]	
	\begin{center} 	
\begin{tikzpicture}[scale=1.0]
\coordinate (A) at (0,1);
\coordinate (B) at (1,2);
\coordinate (C) at (2,1); 
\coordinate (D) at (3, 2);
\coordinate (E) at (4, 1);
\coordinate (F) at (5, 2);
\coordinate (G) at (6, 1);
\coordinate (H) at (7, 2);
\coordinate (I) at (8, 1);
\coordinate (J) at (9, 2);
\coordinate (X) at (4.5, -0.5);
\coordinate (Y) at (4.5, 3.3);
\fill[cyan] (Y) -- (F) -- (X) -- cycle; 	
\fill[pink] (Y) -- (E) -- (X) -- cycle; 			
\draw[very thick, black] (A) -- (B) -- (C) -- (D) -- (E) -- (F) -- (G) -- (H) -- (I) -- (J); 	
\draw[very thick, dashed, black] (Y) -- (E); 
\draw[very thick, dashed, black] (X) -- (F); 
\draw[very thick, dotted, black] (X) -- (Y); 						
\foreach \v in {A,B,C,D,E,G,F,H,I,J} {
\fill[black] (\v) circle (2pt); }
\foreach \v in {A,C,E,G,I} {
\draw[very thick, black] (X) -- (\v); }
\foreach \v in {B,D,F,H,J} {
\draw[very thick, black] (Y) -- (\v); }
\foreach \v in {X,Y} {
\fill[white] (\v) circle (4pt); 
\draw[red] (\v) circle (4pt); 
\fill[red] (\v) circle (2pt);}
\node[] at (4.5, 3.8) {$A$};
\node[] at (4.5,-1) {$D$};
\node[] at (0.7,2.2) {$B_1$};
\node[] at (2.7,2.2) {$B_2$};
\node[] at (5.3,2.2) {$B_3$};
\node[] at (7.3,2.2) {$B_4$};
\node[] at (9.3,2.2) {$B_1$};
\node[] at (-0.3,0.8) {$C_1$};
\node[] at (1.7,0.8) {$C_2$};
\node[] at (3.7,0.8) {$C_3$};
\node[] at (6.3,0.8) {$C_4$};
\node[] at (8.3,0.8) {$C_1$};
\end{tikzpicture}	
	\end{center} \caption{The polyhedron $P_{(2,8)}$ and the tetrahedron $\Delta_{4,4,4}$.} 
	\label{fig28new} 	
\end{figure}
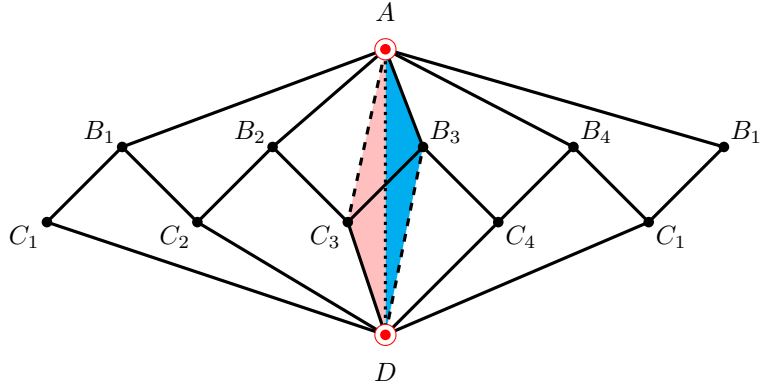	
When $P_{(2,8)}$ is quotient by this dihedral symmetry group, we obtain the tetrahedron $\Delta_{4,4,4} = ADB_3C_3$, whose dihedral angles at edges $AD$, $AB_3$, and $C_3D$ are $\pi/4$, and the remaining angles are $\pi/2$. The Coxeter diagram of the group $\Gamma(\Delta_{4,4,4})$ is shown in Fig.~\ref{figCo2}.	
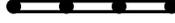
\begin{figure}[ht]	
	\begin{center} 
		\begin{tikzpicture}[scale=0.7] 				
			\draw[very thick, black] (0,-0.1) -- (1,-0.1);
			\draw[very thick, black] (0,0.1) -- (1,0.1);
			\draw[very thick, black] (1,-0.1) -- (2,-0.1);
			\draw[very thick, black] (1,0.1) -- (2,0.1);
			\draw[very thick, black] (2,-0.1) -- (3,-0.1);
			\draw[very thick, black] (2,0.1) -- (3,0.1);
			\fill[black] (0,0) circle (4pt); 
			\fill[black] (1,0) circle (4pt);
			\fill[black] (2,0) circle (4pt);
			\fill[black] (3,0) circle (4pt); 			
		\end{tikzpicture}
	\end{center} 
	\caption{Coxeter diagram of the group $\Gamma(\Delta_{4,4,4})$.%, generated by reflections in the faces of the tetrahedron $\Delta_2 = ADB_3C_3$.
	} \label{figCo2} 	
\end{figure}	
Since $\operatorname{vol} (\Delta_{4,4,4}) = \frac{1}{2} \Lambda(\frac{\pi}{4})$, we have $\operatorname{vol} (P_{(2,8)}) = 4 \Lambda (\frac{\pi}{4}) = 2G > G$. The proof of Lemma~\ref{lemma3.3} is completed. 
\end{proof}

\begin{lemma} \label{lemma3.4}
If the number of ideal vertices in $\mathcal P$ is $V_{\infty}=3$, then $\operatorname{vol} (\mathcal P) \geq G$. Moreover, equality holds only if $\mathcal P$ is a right-angled  triangular bipyramid $\mathcal P_{(3,2)}$. 
\end{lemma}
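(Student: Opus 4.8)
The plan is to push the combinatorial analysis of the preceding lemmas one step further and, for the single combinatorial type that survives, to carry out an explicit volume computation. By Lemmas~\ref{lemma3.1} and~\ref{lemma3.2} together with the parity of $V_f$, if $V_\infty=3$ and $\operatorname{vol}(\mathcal P)\le G$ then $(V_\infty,V_f)$ equals $(3,2)$ or $(3,4)$, so it suffices to treat these two cases. In the case $(3,2)$, formulas (\ref{eqn3}) and (\ref{eqn:W}) give $F=6$ and $W(\mathcal P)=18$, hence $\sum_{n\ge3}(n-3)p_n=W(\mathcal P)-3F=0$ and every face is a triangle. A $3$-polytope with six triangular faces has, by Euler's formula and $2E=3F$, five vertices and nine edges, and the only such combinatorial type is the triangular bipyramid, whose two apices have degree $3$ and whose three equatorial vertices have degree $4$; therefore the three ideal vertices must be the equatorial ones and the two finite vertices the apices. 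This is precisely $\mathcal P_{(3,2)}$, which by Theorem~\ref{theorem2.1} is realizable (uniquely up to isometry), with $\operatorname{vol}(\mathcal P_{(3,2)})=2\Lambda(\pi/4)=G$.

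For the case $(3,4)$ one has $F=7$ and $W(\mathcal P)=24$, hence $\sum_{n\ge3}(n-3)p_n=3$ and in particular $p_3\ge4$. I would introduce the graph $I$ on the ideal vertices $v_1,v_2,v_3$ whose edges are the edges of $\mathcal P$ joining two of them. By Lemma~\ref{rem} every triangular face contains two ideal vertices lying on a common edge, i.e.\ an edge of $I$, and since each edge of $\mathcal P$ lies in only two faces, $p_3\le 2\,|E(I)|$; hence $|E(I)|\ge 2$. A short analysis of the possibility $|E(I)|=2$ (then $p_3=4$, all four triangles share the middle ideal vertex, forcing its link and then the whole $1$-skeleton to collapse to fewer than seven vertices) rules it out, so $I$ is the full triangle on $v_1,v_2,v_3$. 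A bookkeeping argument---recording, on each edge $v_iv_j$ and on the possible ideal face $v_1v_2v_3$, how many incident faces are triangular, and combining this with the degree conditions ($\deg v_i=4$, finite vertices of degree $3$) and the connectedness of the $1$-skeleton---then determines the combinatorial type completely: $\mathcal P$ has the ideal triangle $v_1v_2v_3$ as a face, three triangles $v_iv_jw_{ij}$ with $w_{ij}$ finite, three quadrilaterals $w_{ij}\,v_i\,w_{ik}\,w$, and a seventh, finite, vertex $w$ adjacent to $w_{12},w_{13},w_{23}$. Call this polyhedron $\mathcal Q$. A direct check of conditions (1)--(4) of Theorem~\ref{theorem2.1} shows that $\mathcal Q$ is realizable, uniquely up to isometry; note that (\ref{eqn8}) gives only $\operatorname{vol}(\mathcal Q)\ge G$, so the strict inequality must be extracted from the geometry of $\mathcal Q$.

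To complete the proof I would compute $\operatorname{vol}(\mathcal Q)$ using its symmetry. The combinatorial automorphism group of $\mathcal Q$ is $S_3$ (it fixes $w$ and permutes $\{w_{12},w_{13},w_{23}\}$), it is realized by isometries by the uniqueness in Theorem~\ref{theorem2.1}, and the three transpositions act as reflections in three planes through one common geodesic; cutting $\mathcal Q$ along these planes yields six congruent copies of a Coxeter polyhedron $D$, so $\operatorname{vol}(\mathcal Q)=6\operatorname{vol}(D)$. Here $D$ is a pyramid over a quadrilateral with one ideal apex, all of its dihedral angles being $\pi/2$ except for one angle $\pi/3$ and one angle $\pi/4$. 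Placing the ideal apex at $\infty$ in the upper half-space model makes $D$ a rectangular vertical chimney capped by a hemisphere centred at a corner of the base rectangle, and $\operatorname{vol}(D)$ is then an elementary integral which evaluates to a combination of values of the Lobachevsky function (equivalently, split $D$ into two orthoschemes and apply (\ref{eqnKel})); numerically $\operatorname{vol}(\mathcal Q)=6\operatorname{vol}(D)$ is about $1.5$, in particular $>G$. Together with the case $(3,2)$ this gives $\operatorname{vol}(\mathcal P)\ge G$ whenever $V_\infty=3$, with equality exactly for $\mathcal P=\mathcal P_{(3,2)}$. The hard part is the case $(3,4)$: since the general lower bound (\ref{eqn8}) is not strict there, one is forced to isolate the unique abstract $(3,4)$-polyhedron $\mathcal Q$, verify that it does satisfy Andreev's conditions (so that the bound cannot simply be shown to be non-sharp), and then compute $\operatorname{vol}(\mathcal Q)$ explicitly---this computation being the step on which the whole theorem ultimately rests.
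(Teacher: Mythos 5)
Your proposal is correct in substance and follows the same skeleton as the paper (reduce to $(V_\infty,V_f)\in\{(3,2),(3,4)\}$, show each case forces a unique combinatorial type, compute the two volumes), but both sub-steps are executed differently. In case $(3,2)$ you classify simplicial $3$-polytopes with five vertices, while the paper argues directly that the three ideal vertices form a $3$-cycle with two triangles on each of its edges; these are equivalent. In case $(3,4)$ your route through the graph $I$ of ideal--ideal edges replaces the paper's subcase analysis (triangle containing all three ideal vertices; quadrilateral with exactly one ideal vertex; none --- killed by the count $WI(\mathcal P)=12$ versus $\geq 14$). Your sketch does close, but the ``bookkeeping'' must in particular rule out the configuration in which all three edges $v_iv_j$ exist yet $v_1v_2v_3$ is \emph{not} a face: distributing the four triangles over these edges as $(2,1,1)$ or $(2,2,0)$ and using that ideal vertices have degree $4$ and finite ones degree $3$ leaves the remaining finite vertices unable to reach degree $3$, so the ideal triangle is facial and the type is the paper's $P_{(3,4)}$; this check is routine but should be written out. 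Your volume computation is genuinely different: the paper reflects $P_{(3,4)}$ in the two quadrilateral faces through $A,B,D$ and $A,C,D$ (Fig.~\ref{fig:a4p34}) to obtain the ideal antiprism $\mathcal A_4$, so $\operatorname{vol}(P_{(3,4)})=\tfrac14\operatorname{vol}(\mathcal A_4)=2\bigl[\Lambda(3\pi/8)+\Lambda(\pi/8)\bigr]=1.505361$ in closed form from (\ref{eqnantiprism}), whereas you quotient by the $S_3$ symmetry (legitimately realized by isometries via the uniqueness in Theorem~\ref{theorem2.1}) into six copies of a quadrilateral pyramid with ideal apex; I checked that your claimed angle data is right (all $\pi/2$ except $\pi/3$ along the axis and $\pi/4$ along the edge where a mirror bisects a right dihedral angle), and $6\cdot\operatorname{vol}(D)$ indeed equals $1.505361$.

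The one point you should not leave as stated is the decisive inequality itself: you assert ``numerically about $1.5$'' without carrying out the integral or the orthoscheme decomposition, and, as you yourself note, this is the step the lemma rests on (the general bound (\ref{eqn8}) only gives $\geq G$ here). Either complete your chimney integral (it does reduce to Lobachevsky values), or use (\ref{eqnKel}) on an explicit orthoscheme subdivision, or simply adopt the paper's antiprism doubling, which yields the exact value with no further work. With that step made explicit, your argument is a complete and valid alternative proof of the lemma.
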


\begin{proof}
By Lemma~\ref{lemma3.2}, it remains to consider two cases: $(V_{\infty}, V_f) \in \{ (3,2), (3,4) \}$. To follow the general enumeration of cases belonging to the regioin $\Omega$, we will refer to these as the fourth and fifth cases.

\noindent
\textbf{Case 4:} $(V_\infty, V_f) = (3,2)$. From formulas~(\ref{eqn3}) and~(\ref{eqn:W}), we obtain $F=6$ and $W(P) = 18$. Assume that $\mathcal P$ has at least one face with four or more vertices. Then $W(\mathcal P) \geq 4 + 5 \cdot 3 = 19$, which leads to a contradiction. Hence, all faces of $\mathcal P$ must be triangles. Let us denote ideal vertices of $\mathcal P$ by $v_1$, $v_2$ and $v_3$. Since each of the six triangular faces must contain at least two ideal vertices, $\mathcal P$ must have at least three edges connecting vertices $v_1$, $v_2$ and $v_3$. Thus, the three ideal vertices form a cycle of length three in the 1-dimensional skeleton of the polyhedron, and the six triangular faces adjoin the edges of this cycle---two faces per edge. Hence $\mathcal P$ coincides with the polyhedron $P_{(3,2)}$ shown in Fig.~\ref{fig32}\,(b), whose volume is $\operatorname{vol}(P_{(3,2)}) = 2 \Lambda \left( \frac{\pi}{4} \right) = G$. 

\smallskip 
\noindent
\textbf{Case 5:} $(V_\infty, V_f) = (3,4)$. From formulas~(\ref{eqn3}) and~(\ref{eqn:W}), we obtain $F=7$ and  
\begin{equation} 
	W(\mathcal P) = 3 V_f + 4 V_{\infty} = 24. \label{eqn200}
\end{equation}
First, note that $\mathcal P$ has only triangular and quadrilateral faces. Indeed, assume for contradiction that there exists a $k$-gonal face $f$ with $k \geq 5$. Observe that $f$ must contain at least two ideal vertices. Otherwise, $f$ would have at least three edges with only finite vertices. Then the three faces adjacent to $f$ along these edges would have at least four vertices each. Hence, $W(P) \geq k + 4 \cdot 3 + 3 \cdot 3 \geq 26$, since $k \geq 5$, which leads to a contradiction. But if $f$ has at least two ideal vertices, then $f$ is adjacent to at least seven faces, meaning $\mathcal P$ has at least eight faces. This contradicts $F=7$. Thus, $\mathcal P$ contains only triangular and quadrilateral faces. Moreover, since $W(\mathcal P) = 3 p_3 + 4 p_4 = 24$ and $p_3 + p_4 = 7$, it follows that $p_3 = 4$ and $p_4 = 3$. 

Let us consider all possible cases of positions of triangular and quadrilateral faces.  

\smallskip 
\noindent
\textbf{Subcase 5.1:} Suppose $\mathcal P$ has a triangular face $T_0$ containing all three ideal vertices $v_1$, $v_2$ and $v_3$. Then the remaining triangular faces $T_i$, $i=1, 2, 3$, are adjacent to $T_0$ along edges, as shown in Fig.~\ref{fig34}\,(a). For $i=1,2,3$, let $Q_i$ denote the quadrilateral face sharing an ideal vertex with triangles $T_0$, $T_i$, and $T_{i+1}$. 
\begin{figure}[ht]	
	\begin{center} 	
\begin{tikzpicture}[scale=1.]
	\coordinate (A) at (0.25,0.25);
	\coordinate (B) at (2.75,0.25);
	\coordinate (C) at (1.5, 2.25); 
	\coordinate (D1) at (1.5, -1);
	\coordinate (D2) at (-0.5, 2);
	\coordinate (D3) at (3.4, 2);
	\coordinate (E) at (0.25, 1.75);
	\coordinate (F) at (2.8, 1.75);
	\coordinate (G) at (1.5, -0.6);
	\draw[very thick, black] (A) -- (B) -- (C) -- cycle; 
	\draw[very thick, black] (D2) -- (E);
	\draw[very thick, black] (D3) -- (F);
	\draw[very thick, black] (C) -- (F);
	\draw[very thick, black] (C) -- (E);
	\draw[very thick, black] (A) -- (G);
	\draw[very thick, black] (B) -- (G);
	\draw[very thick, black] (D1) -- (G);
	\draw[very thick, black] (E) -- (A);
	\draw[very thick, black] (F) -- (B);
	\foreach \v in {A,B,C} {
		\fill[white] (\v) circle(4pt);
		\draw[red] (\v) circle (4pt); 
		\fill[red] (\v) circle (2pt);}				
	\foreach \v in {E,F,G} {
		\fill[black] (\v) circle (2pt);}	
		\node[] at (-0.3,0.1) {$Q_1$};
		\node[] at (1.5,2.75) {$Q_2$};
		\node[] at (3.3,0.1) {$Q_3$};
		\node[] at (1.5,0.95) {$T_0$};
		\node[] at (1.5,-0.1) {$T_1$};
		\node[] at (0.5,1.3) {$T_2$};
		\node[] at (2.5,1.3) {$T_3$};
		\node[] at (1.5,-1.5) {(a)};
\end{tikzpicture}
\qquad \qquad 
\begin{tikzpicture}[scale=1.1]
\coordinate (A) at (0,0);
\coordinate (B) at (3,0);
\coordinate (C) at (1.5, 2.5); 
\coordinate (D) at (1.5, 0.9);
\coordinate (E) at (1.25, 1.2);
\coordinate (F) at (1.75, 1.2);
\coordinate (G) at (1.5, 0.6);
\draw[very thick, black] (A) -- (B) -- (C) -- cycle; 
\draw[very thick, black] (D) -- (E);
\draw[very thick, black] (D) -- (F);
\draw[very thick, black] (C) -- (F);
\draw[very thick, black] (C) -- (E);
\draw[very thick, black] (A) -- (G);
\draw[very thick, black] (B) -- (G);
\draw[very thick, black] (D) -- (G);
\draw[very thick, black] (E) -- (A);
\draw[very thick, black] (F) -- (B);
\foreach \v in {A,B,C} {
\fill[white] (\v) circle(4pt);
\draw[red] (\v) circle (4pt); 
\fill[red] (\v) circle (2pt);}				
\foreach \v in {D,E,F,G} {
\fill[black] (\v) circle (2pt);}
\fill[white] (0,-0.5) circle (2pt);
		\node[] at (1.5,-1.) {(b)};
\end{tikzpicture}
	\end{center} \caption{The polyhedron $P_{(3,4)}$ and its Schlegel diagram.} \label{fig34} 	
\end{figure}
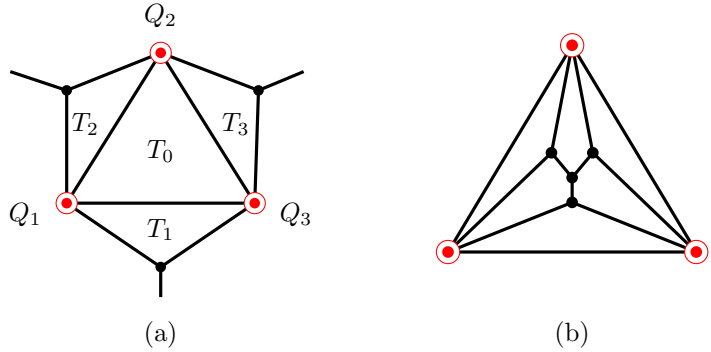
Since $\mathcal P$ has 12 edges, the three edges, along which pairs of faces $Q_i Q_{i+1}$ intersect, must share a common finite vertex. In Fig.~\ref{fig34}\,(a), this vertex is assumed to be pictured far away. The Schlegel diagram of the same polyhedron is presentd in Fig.~\ref{fig34}\,(b). Since this polyhedron has $3$ ideal and $4$ finite vertices, we will denote it by $P_{(3,4)}$. 

To find the volume $\operatorname{vol} (\mathcal P_{(3,4)})$, note that under the action of the dihedral group of order 4 generated by reflections in the faces of $\mathcal P_{(3,4)}$ passing through the finite vertices $A$, $B$, $D$, and the finite vertices $A$, $C$, $D$, we obtain the rectangular quadrilateral antiprism $\mathcal A_4$, see Fig.~\ref{fig:a4p34}. Thus, $\operatorname{vol}(P_{(3,4)}) = \frac{1}{4} \operatorname{vol}(\mathcal A_4)$. Using formula~(\ref{eqnantiprism}), we obtain the approximate value $\operatorname{vol}(P_{(3,4)}) = 1.505361 > G$.
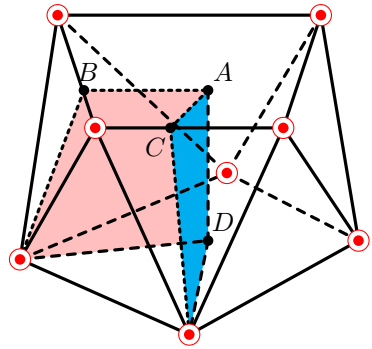
\begin{figure}[ht]	
	\begin{center} 
		\begin{tikzpicture}[scale=1.0, line join=round, line cap=round]
		\coordinate (0) at (-14.75, 0);
		\coordinate  (1) at (-17, 1);
		\coordinate (2) at (-12.5, 1.25);
		\coordinate (3) at (-14.25, 2.15);
		\coordinate  (4) at (-16.5, 4.25);
		\coordinate (5) at (-13, 4.25);
		\coordinate (6) at (-13.5, 2.75);
		\coordinate (7) at (-16, 2.75);
		\coordinate (8) at (-14.5, 1.25); %D
		\coordinate (9) at (-15, 2.75);
		\coordinate (10) at (-14.5, 3.25); %A
		\coordinate (11) at (-16.15, 3.25);
		\fill[pink] (10) -- (8)  -- (1) -- (11) -- cycle; 
		\fill[cyan] (10) -- (9) -- (0) -- (8) -- cycle; 
		\draw[very thick, black, dashed] (1) to (3);
		\draw[very thick, black, dashed] (3) to (2);
		\draw[very thick, black] (2) to (0);
		\draw[very thick, black] (0) to (1);
		\draw[very thick, black] (4) to (1);
		\draw[very thick, black, dashed] (4) to (3);
		\draw[very thick, black, dashed] (5) to (3);
		\draw[very thick, black] (5) to (2);
		\draw[very thick, black] (0) to (6);
		\draw[very thick, black] (6) to (2);
		\draw[very thick, black] (6) to (5);
		\draw[very thick, black] (5) to (4);
		\draw[very thick, black] (4) to (7);
		\draw[very thick, black] (7) to (6);
		\draw[very thick, black] (7) to (0);
		\draw[very thick, black] (7) to (1);
		\draw[very thick, black, dashed] (1) to (8);
		\draw[very thick, black, dashed] (8) to (0);
		\draw[very thick, black, dotted] (10) to (9);
		\draw[very thick, black, dashed] (10) to (8);
		\draw[very thick, black, dotted] (11) to (10);
		\draw[very thick, black, dotted] (11) to (1);
		\draw[very thick, black, dotted] (9) to (0);
			\foreach \v in {0,1,2,3,4,5,6,7} {
			\fill[white] (\v) circle (4pt); 
			\draw[red] (\v) circle (4pt); 
			\fill[red] (\v) circle (2pt);}				
		\foreach \v in {8,9,10,11} {
			\fill[black] (\v) circle (2pt);}
			\node[] at (-14.3, 3.5) {$A$};
			\node[] at (-16.1, 3.5) {$B$};
			\node[] at (-15.2, 2.5) {$C$};
			\node[] at (-14.3, 1.5) {$D$}; 
		\end{tikzpicture}
	\end{center} \caption{The polyhedron $P_{(3,4)}$ as a $\frac{1}{4}$-slice of the antiprism $\mathcal A_4$.} \label{fig:a4p34} 	
\end{figure}

Next, we assume that $\mathcal P$ has no triangular face containing all three ideal vertices. Recall that in the case under consideration, $\mathcal P$ has only triangular and quadrilateral faces. 

\smallskip 
\noindent
\textbf{Subcase 5.2:} Suppose $\mathcal P$ has a quadrilateral face $Q_1$ with exactly one ideal vertex, say $v_1$. We follow the notation in Fig.~\ref{fig41}\,(a). 
\begin{figure}[ht]	
	\begin{center} 	
		\begin{tikzpicture}[scale=0.9]
			\coordinate (A) at (-1,-1);
			\coordinate (B) at (-1,1);
			\coordinate (C) at (1, 1); 
			\coordinate (D) at (1, -1);
			\coordinate (E1) at (-0.5, 1.75);
			\coordinate (E2) at (0.5, 1.75);
			\coordinate (F1) at (1.75, 0.5);
			\coordinate (F2) at (1.75, -0.5);
			\coordinate (G) at (-1.5, -1.5); 		
			\draw[very thick, black] (A) -- (B) -- (C) -- (D) -- cycle; 
			\draw[very thick, black] (B) -- (E1);
			\draw[very thick, black] (C) -- (E2);
			\draw[very thick, black] (C) -- (F1);
			\draw[very thick, black] (D) -- (F2);
			\draw[very thick, black] (A) -- (G);
			\foreach \v in {C} {
				\fill[white] (\v) circle (4pt); 
				\draw[red] (\v) circle (4pt); 
				\fill[red] (\v) circle (2pt);}				
			\foreach \v in {A,B,D} {
				\fill[black] (\v) circle (2pt);}
			\node[] at (0,0) {$Q$};
			\node[] at (-1.5,0) {$Q_2$};
			\node[] at (0,-1.5) {$Q_1$};
			\node[] at (1.5,0) {$T_2$};
			\node[] at (0,1.5) {$T_1$};
			\node[] at (1.3,1.3) {$T_3$};	
			\node[] at (0.8,0.8) {$v$};
		\end{tikzpicture}
		\qquad \qquad 
		\begin{tikzpicture}[scale=0.9] 
			\coordinate (A) at (-1,-1);
			\coordinate (B) at (-1,1);
			\coordinate (C) at (1, 1); 
			\coordinate (D) at (1, -1);
			\coordinate (E) at (0, 2.75);
			\coordinate (F) at (2.75, 0);
			\coordinate (G) at (-1.5, -1.5);				
			\draw[very thick, black] (A) -- (B) -- (C) -- (D) -- cycle; 
			\draw[very thick, black] (B) -- (E) -- (C) -- cycle;
			\draw[very thick, black] (C) -- (F) -- (D) -- cycle;
			\draw[very thick, black] (A) -- (G);
			\foreach \v in {C,E,F} {
				\fill[white] (\v) circle (4pt); 
				\draw[red] (\v) circle (4pt); 
				\fill[red] (\v) circle (2pt);}				
			\foreach \v in {A,B,D} {
				\fill[black] (\v) circle (2pt);}
			\node[] at (0,0) {$Q$};
			\node[] at (-1.5,0) {$Q_2$};
			\node[] at (0,-1.5) {$Q_1$};
			\node[] at (1.5,0) {$T_2$};
			\node[] at (0,1.5) {$T_1$};
			\node[] at (1.3,1.3) {$T_3$}; 	
			\node[] at (0.8,0.8) {$v$};
			\node[] at (0,3.07) {$v_1$};
			\node[] at (3.07,0) {$v_2$};
		\end{tikzpicture}
	\end{center} \caption{\textbf{Subcase 5.2}: Quadrilateral $Q_1$ with one ideal vertex $v_1$.} \label{fig41} 	
\end{figure}
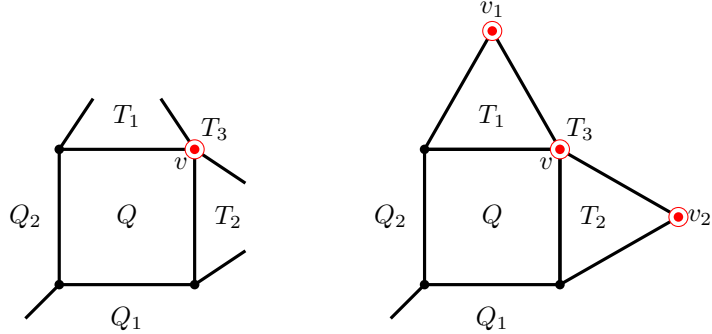
Since the faces $Q_1$ and $Q_2$ contain edges with both vertices finite, they cannot be triangles and must therefore be quadrilaterals. Since the number of quadrilateral faces is three, $\{ Q_1, Q_2, Q_3\}$ is the complete list of  quadrilateral faces in $P$, and the remaining 4 faces are triangular. Let us denote of them by $T_1$, $T_2$, and $T_3$, as in Fig.~\ref{fig41}\,(a). Then $T_1$ contains an ideal vertex $v_2$ adjacent to $v_1$, and $T_2$ contains an ideal vertex $v_3$ adjacent to $v_1$. Since $T_3$ is a triangle, the vertices $v_2$ and $v_3$ must be connected by an edge. Thus, $T_3$ is a triangular face of $\mathcal P$ containing all three ideal vertices $v_1$, $v_2$, $v_2$. This situation has already been addressed in~\textbf{Subcase~5.1}. 

\smallskip 
\noindent
\textbf{Subcase 5.3:} Suppose that $\mathcal P$ has no quadrilateral face with exactly one ideal vertex. By Lemma~\ref{rem}, each quadrilateral face must contain at least one ideal vertex, so every quadrilateral face of $\mathcal P$ has at least two ideal vertices. Let $Q_1$, $Q_2$, and $Q_3$ denote the quadrilateral faces of $\mathcal P$, and let $k_1 \geq 2$, $k_2 \geq 2$, and $k_3 \geq 2$ be the numbers of ideal vertices in them respectively. For a polyhedron $\mathcal P$, define by $WI(\mathcal P)$ the sum of number of ideal vertices counted across all faces. Since $V_{\infty}=3$ and each ideal vertex has degree $4$, we have $WI(\mathcal P)=4 \cdot 3 = 12$. On the other hand, since $\mathcal P$ also has $Q_1, Q_2, Q_3$ and four triangular faces, each containing exactly two ideal vertices, the total number of ideal vertices across all faces is $WI(\mathcal P)=2 \cdot 4 + k_1 + k_2 + k_3 \geq 8 + 6 = 14$. This leads to a contradiction. 
\end{proof}

It follows rom Lemmas~\ref{lemma3.1},~\ref{lemma3.2},~\ref{lemma3.3}, and~\ref{lemma3.4},  that the volume of any right-angled hyperbolic polyhedron is bounded below by Catalan's constant $G$, with equality achieved only for the polyhedron $P_{(3,2)}$. This completes the proof of Theorem~\ref{theorem1.1}.

\section{Arithmeticity of Right-Angled Coxeter Groups}  \label{sec4}

It is well known that the arithmeticity of discrete groups $\Gamma < \operatorname{Isom} (\mathbb H^3)$ of finite covolume play an important role in studying of hyperbolic manifolds and orbifolds $\mathbb H^3 / \Gamma$, see~\cite{MaRe}. Here we only mention about the following important property: by Margulis's theorem, see, e.g.,~\cite[Th.~10.3.5]{MaRe}, the commensurator  
$$
\operatorname{Comm (\Gamma)} = \{ \gamma \in \operatorname{Isom} (\mathbb H^3) \, | \, \gamma \Gamma \gamma^{-1} \text{ and } \Gamma \, \text{are commensurable}\}
$$ 
is discrete if and only if $\Gamma$ is non-arithmetic. 

In~\cite{Vi1}, Vinberg established necessary and sufficient conditions for arithmeticity of discrete groups of motions of $\mathbb H^n$ generated by finitely many reflections and having a fundamental polyhedron of finite volume. Since right-angled Coxeter groups are under discussion in this paper, we recall that it was noted in~\cite{Ve3} that the group  $\Gamma(\mathcal L_n)$, generated by reflections in faces of a compact right-angled L{\"o}bell polyhedron, is non-arithmetic for $n \not\in\{5,6,7,8,10,12,18\}$, and later in~\cite{AMR}, it was shown that $\Gamma(\mathcal L_n)$ is arithmetic if and only if $n\in\{5,6,8\}$ (see also~\cite{BoDu}). Using Vinberg's arithmeticity criterion, it was established in~\cite{Ke3} that the group $\Gamma(\mathcal A_n)$, generated in faces of an ideal right-angled antiprism, is arithmetic if and only if $n\in\{3, 4\}$. 

As noted in~\cite{Vi1}, Vinberg's arithmeticity conditions simplify significantly if the fundamental polyhedron $P$ of $\Gamma(P)$ is non-compact. Namely, let $A(P) = (a_{ij})_{i,j=1}^N$ be the Gram matrix of $P$. Let $\operatorname{Cyc}(A)$ denote the set of all cyclic products of the form $a_{i_1 i_2} a_{i_2 i_3} \cdots a_{i_{m-1} i_m} a_{i_m i_1}$. Then, for $\Gamma$ to be arithmetic, it is necessary and sufficient that all cyclic products in $\operatorname{Cyc}(2 \cdot A(P))$ lie in $\mathbb Z$. For example, for the group $\Gamma(\Delta_{4,4,4})$ the  doubled Gram matrix has the form 
$$
2 \cdot A (\Delta_{4,4,4}) = 
\begin{pmatrix} 
2 & - \sqrt{2} & 0 & 0 \cr 
-\sqrt{2} & 2 & -\sqrt{2} & 0 \cr 
0 & -\sqrt{2} & 2 & -\sqrt{2}  \cr 
0 & 0 & -\sqrt{2} & 2
\end{pmatrix}, 
$$
that admits to verify arithmeticity of this group easily.  

It is known that $\Gamma(\Delta_{3,4,4})$ and $\Gamma(\Delta_{4,4,4})$ are commensurable with the Picard group $\operatorname{PSL} (2,\mathbb Z{\sqrt{-1}})$, see, e.g.,~\cite[Fig.~13.3]{MaRe}. Since $\Gamma (P_{(3,2)})$ is commensurable with $\Gamma(\Delta_{3,4,4})$ (see Fig.~\ref{fig32}), $\Gamma(P_{(2,8)})$ is commensurable with $\Gamma(\Delta_{4,4,4})$ (see Fig.~\ref{fig28new}), and $\Gamma (P_{(3,4)})$ is commensurable with the group generated by reflections in the faces of the right-angled antiprism $\mathcal A_4$ (see Fig.~\ref{fig:a4p34}), all three groups $\Gamma (P_{(3,2)})$, $\Gamma (P_{(2,8)})$, and $\Gamma (P_{(3,2)})$ are arithmetic. Let us fix this face as the following remark. 

\begin{remark}
The right-angled hyperbolic Coxeter groups $\Gamma (P_{(3,2)})$, $\Gamma (P_{(3,4)})$, and $\Gamma(P_{(2,8)})$ are arithmetic. 
\end{remark} 

\section{Open Questions} \label{sec5}

We conclude by formulating some open questions. 

\begin{question}
Classify arithmetic right-angled hyperbolic Coxeter groups.
\end{question}

In~\cite[Prop.~5]{Kol}, it was shown that the antiprism $\mathcal A_n$, $n \geq 3$, is minimal in terms of the number of faces among ideal hyperbolic polyhedra with an $n$-gonal face. A natural question arises about polyhedra with a similar property in the class of compact polyhedra. 

\begin{question}
Is it true that the L{\"o}bell polyhedron $L(n)$, $n \geq 5$, has minimal number of faces is the class of compact right-angled hyperbolic polyhedra having at least one $n$-gonal face?
\end{question}	

In~\cite{DrKe}, the minimal co-volume non-arithmetic hyperbolic Coxeter group with a non-compact fundamental polyhedron was found. A natural question arises about a right-angled Coxeter group with a similar property. 

\begin{question} 
What is a minimal co-volume non-arithmetic hyperbolic Coxeter group? 
\end{question}

We also recall a question posed in~\cite[p.~66]{PoVi}. 

\begin{question}
Is it true that the smallest number of hyperfaces in a compact right-angled polyhedron in $\mathbb H^4$ is $120$?
\end{question}

\end{document}